\DeclareMathAlphabet{\mathcalligra}{T1}{calligra}{m}{n}
\newtheorem{thm}{Theorem}[section]
\newtheorem{cor}[thm]{Corollary}
\newtheorem{lem}[thm]{Lemma}
\newtheorem{prop}[thm]{Proposition}
\newtheorem{prob}[thm]{Problem}
\theoremstyle{definition}
\newtheorem{defn}[thm]{Definition}
\newtheorem{rem}[thm]{Remark}
\newtheorem*{defn*}{Definition}
\newtheorem*{rems*}{Remarks}
\newtheorem*{rem*}{Remark}
\newtheorem{ex}[thm]{Example}
\numberwithin{equation}{section}
\renewcommand{\d}{\mathrm{d}}
\newcommand{\rot}{\mathrm{rot}}
\newcommand{\e}{\mathrm{e}}
\renewcommand{\i}{\mathrm{i}}
\renewcommand{\p@subfigure}{\thefigure} 
\begin{document}

\title[The $k$th Order Preserving Set] {The $k$th Order Preserving Sets and isoperimetric-type inequalities for planar ovals}

\author[M.F. Safarewicz and M. Zwierzy\'nski]{Maksymilian Filip Safarewicz$^{1}$, Michał Zwierzy\'nski$^{2,\pentagram}$}
\address{Warsaw University of Technology\\
Faculty of Mathematics and Information Science\\
ul. Koszykowa 75\\
00-662 Warszawa\\
Poland}
\email{$^{1}$Maksymilian.Safarewicz.stud@pw.edu.pl, ORCID: 0009-0001-5049-1093}
\email[Corresponding author]{$^{2}$Michal.Zwierzynski@pw.edu.pl, ORCID: 0000-0002-9627-1563}

\thanks{$^{\pentagram}$Corresponding author}

\subjclass[2020]{Primary: 52A38, 52A40, 53A04. Secondary: 52A10, 58K70.}

\keywords{convex curve, hedgehog, isoperimetric inequality, $k$th order preserving set, Wigner caustic}

\begin{abstract}
In this work, we introduce and investigate a new class of sets, the \textit{$k$th Order Preserving Sets}, arising naturally from the Fourier analysis of support functions associated with hedgehogs. Specifically, we focus on sets whose support functions possess a Fourier series that preserves only terms with positive indices divisible by a fixed $k$.

We explore the geometry of the \textit{$k$th Order Midpoint Set}, defined as the set of centroids of all equiangular $k$-gons circumscribed about a given hedgehog. This set captures essential structural and symmetry-related features of the underlying geometric configuration.

We study the geometric properties of such sets and, in particular, establish an isoperimetric-type inequality relating the perimeter and area of a region bounded by a simple smooth convex closed curve (an oval) $\mathcal{O}$:
\[
L_{\mathcal{O}}^2 - 4\pi A_{\mathcal{O}} \geqslant 4\pi |A_{\mathcal{P}_k}| + 2\pi |A_{\Omega_{\mathcal{O},k}}|,
\]
where $L_{\mathcal{O}}$ denotes the length (perimeter) of $\mathcal{O}$, $A_{\mathcal{O}}$ is the area of the region enclosed by $\mathcal{O}$, $A_{\mathcal{P}_k}$ is the oriented area of the associated $k$th Order Preserving Set $\mathcal{P}_k$, and $A_{\Omega_{\mathcal{O},k}}$ is the oriented area of the associated $k$th Order Midpoint Set $\Omega_{\mathcal{O},k}$. Moreover, we characterize the equality case: the inequality becomes an equality if and only if every equiangular circumscribed $k$-gon around $\mathcal{O}$ is a~regular $k$-gon with its center of mass located at the Steiner point of $\mathcal{O}$.

\end{abstract}

\maketitle

\section*{Statements and Declarations}
Competing Interests: The authors declare that they have no competing interests.

Funding: The authors did not receive support from any organization for the submitted work.

\section{Introduction}

\noindent In recent years, the study of isoperimetric inequalities and related geometric relations has gained renewed momentum, driven by their pivotal role in understanding the behavior of geometric flows, curvature-driven evolutions, and shape optimization problems. These inequalities establish a strong link between classical geometric quantities -- such as perimeter, area, and curvature -- and more recently introduced concepts like the oriented areas of evolutes and Wigner caustics, with applications spanning convex and differential geometry, singularity theory, and mathematical physics -- see \cite{CGR1, G6, G7, GS1, G4, G2, kwonglee, kwong, PanShu, Schmidt, YY,  ZD, Z2, Z3, ZCWMS}, and the literature therein.

In this work, we introduce and investigate a new class of geometric sets, which we call $k$th Order Preserving Sets. These sets arise naturally from the Fourier analysis of support functions associated with a particular class of planar objects known as hedgehogs. Specifically, we focus on support functions whose Fourier series preserves only the terms with positive indices divisible by a fixed integer $k$. This selective frequency retention reveals geometric structures that encode specific symmetry and regularity conditions. Furthermore, we introduce and study the geometry of the \textit{$k$th Order Midpoint Set}, defined as the set of centroids of all equiangular $k$-gons circumscribed about a given hedgehog. By circumscribing a $k$-gon about a hedgehog, we mean that each edge of the polygon lies along a support line of the hedgehog.
One of the central results of our study is a new isoperimetric-type inequality (see Theorem \ref{ThmIsoEq2}), which connects the classical perimeter-area difference of a smooth convex closed curve $\mathcal{O}$ (an oval) with the oriented area of the associated $k$th Order Preserving Set $\mathcal{P}_k$ and $k$th Order Midpoint Set $\Omega_{\mathcal{O},k}$:
\begin{align*}
L_{\mathcal{O}}^2 - 4\pi A_{\mathcal{O}} \geqslant  4\pi |A_{\mathcal{P}_k}| + 2\pi |A_{\Omega_{\mathcal{O},k}}|,
\end{align*}
where $L_{\mathcal{O}}, A_{\mathcal{O}}, A_{\mathcal{P}_k}, A_{\Omega_{\mathcal{O},k}}$ are the length of $\mathcal{O}$, the area bounded by $\mathcal{O}$, the oriented area of the $k$th Order Preserving Set $\mathcal{P}_k$ and that of the $k$th Order Midpoint Set $\Omega_{\mathcal{O},k}$, respectively. We further characterize the case of equality in this inequality: it holds if and only if every equiangular circumscribed $k$-gon about $\mathcal{O}$ is a regular \linebreak $k$-gon whose centroid coincides with the Steiner point of the curve $\mathcal{O}$ -- a fundamental affine-invariant center introduced by Blaschke.

Interestingly, while investigating the equality case in our isoperimetric-type inequality, we found ourselves studying a problem that is, in a certain sense, dual to the classical Square Peg Problem. The Square Peg Problem -- also known as Toeplitz’s conjecture -- asks whether every Jordan curve contains four points that form the vertices of a square. Despite its apparent simplicity, this problem remains open in full generality and has inspired a wealth of research in topological and geometric methods (for a survey of this problem see~\cite{PegSurvey}).

In contrast, our setting takes a different perspective. On the one hand, for convex curves such as ovals, the existence of a circumscribed square is a classical consequence of continuity arguments and can be established easily. On the other hand, we pose a more refined and structured question:
Given that there exist infinitely many equiangular $k$-gons circumscribed about a convex curve or a hedgehog, under what conditions are all of them regular and share a common centroid?

Rather than focusing on the existence of a single configuration, we study the geometry of the entire family of such polygons and the structure of the set of their centroids -- a direction that leads us to the notion of the $k$th Order Midpoint Set. This approach bridges harmonic analysis of support functions with classical polygonal constructions and reveals deep symmetry properties of the underlying curve.

The paper is organized as follows.
 
Section~\ref{sec2} offers a geometrical and analytical foundation for the study.
It introduces the key geometric quantities associated with smooth convex curves and discusses how they can be expressed using the Fourier coefficients of the corresponding support function. This approach enables a seamless transition from classical geometric intuition to harmonic analysis, setting the groundwork for the constructions and inequalities presented in the following sections.

Section~\ref{sec3} develops the core concept of this work, the $k$th Order Preserving Set, defined through the framework of isogonal points (see Definition~\ref{k-tuple pair}).
These sets arise from families of equiangular polygonal configurations, determined by pairs of isogonal points located on a given curve. Subsequently, we analyze the Fourier-theoretic properties of these sets, viewing them as natural consequences of the geometric construction. The~section also explores their structural characteristics, such as singularities, symmetry, and regularity.

Section ~ \ref{sec4} is dedicated to the introduction of the $k$th Order Midpoint Set.
This object is defined as the set of centroids of all equiangular $k$-gons circumscribed about a~given hedgehog. It encodes important information about the symmetry and internal structure of the underlying curve and establishes a natural connection between polygonal configurations and analytic representations via support functions.

Finally, Section ~ \ref{sec5} is divided into two subsections. Subsection \ref{ssec51} focuses on the formulation of the main isoperimetric-type inequalities. We derive a lower bound for the isoperimetric deficit involving the perimeter and area of an oval, expressed in terms of the oriented areas of the corresponding $k$th Order Preserving Set and $k$th Order Midpoint Set.  Additionally, we analyze the equality case. Subsection \ref{ssec52} examines the stability of the aforementioned isoperimetric-type inequalities in certain special cases and investigates how geometric deviations influence a particular instance of the inequality, thereby providing insight into its stability properties.


\section{Geometrical and analytical introduction}\label{sec2}

\noindent In this section, we begin by presenting the essential definitions, formulas, and geometric constructions that will be used throughout the rest of the paper. These foundational elements provide the necessary context for new concepts introduced in later sections.

Let $\mathcal{H}$ denote a \textit{smooth planar curve}, i.e., a $C^\infty$-smooth map from an interval into $\mathbb{R}^2$. The curve $\mathcal{H}$ is said to be \textit{closed} if it is a $C^\infty$ map from the unit circle $S^1$ to $\mathbb{R}^2$. It is called \textit{regular} if its velocity vector never vanishes along the parameter domain. Otherwise, it is called \textit{ singular}. A singular point is a \textit{cusp} (or an \textit{ordinary cusp}) if it is locally diffeomorphic (in the source and in the target) to the map $t\mapsto(t^2,t^3)$ at $t=0$. It is well known (e.g., see Theorem B.9.1 in \cite{UYBook}) that a map $f: S^1 \to \mathbb{R}^2$ has an ordinary cusp at $s_0$ if and only if $f'(s_0)=0$ and $\det\big(f''(s_0),f'''(s_0)\big)\neq 0.$ A regular \textit{simple} (i.e., non-self-intersecting) closed curve is called \textit{convex} if its signed curvature maintains a constant (nonzero) sign throughout. A \textit{hedgehog} is a closed planar curve that can be represented as the boundary of the Minkowski difference of two convex bodies in the plane (see \cite{MMY, MMY2}). This construction allows hedgehogs to generalize support functions beyond the convex setting. An \textit{oval}, in contrast, is a simple, smooth, regular, and closed convex curve. Ovals play a central role in classical convex geometry and serve as natural domains for the Fourier analysis of support functions.
For every direction $u(s) = (\cos (s), \sin (s)) \in S^1$, the \textit{support function} $h: [0,2\pi] \to \mathbb{R}$ of a~hedgehog $\mathcal{H}$ is defined as a smooth function in the following way:
\begin{align*}
h(s):= \sup_{x \in \mathcal{H}} \langle x, u \rangle,\
\end{align*}
where $\langle \cdot, \cdot \rangle$ denotes the standard Euclidean inner product.
If $\mathcal{H}$ is represented as the Minkowski difference $\mathcal{H} = \mathcal{K}_1 - \mathcal{K}_2$, where $\mathcal{K}_1, \mathcal{K}_2$ are ovals, then the support function satisfies:
$$
h(s) = h_{\mathcal{K}_1}(s) - h_{\mathcal{K}_2}(s),
$$
where $h_{\mathcal{K}_1}$ and $h_{\mathcal{K}_2}$ are the classical support functions of $\mathcal{K}_1$ and $\mathcal{K}_2$, respectively.

Given a $2\pi$-periodic smooth function $h(s)$, one can obtain the parameterization of a hedgehog $\mathcal{H}$ as follows:
\begin{align}
\label{eqHedghehogForm}\mathcal{H}(s) &= h(s)\, u(s) + h'(s)\, u'(s)\\ 
\nonumber&=\Big( h(s) \cos(s) - h'(s) \sin(s),  h(s) \sin(s) + h'(s) \cos(s) \Big),
\end{align}
where $u(s) = (\cos (s), \sin (s))$ and $u'(s) = (-\sin (s), \cos (s))$ is the vector $u(s)$ rotated by $\frac{\pi}{2}$ counterclockwise.

A \textit{support line} of a hedgehog $\mathcal{H}$ at the point $\mathcal{H}(s)$ is defined as the line passing through $\mathcal{H}(s)$ and orthogonal to $u(s)$. Geometrically, the support function $h(s)$ can be interpreted as the oriented distance from the origin to this support line.

\begin{rem}
If $h(s)+h''(s)$ has a constant sign for all $s$, then the resulting curve $\mathcal{H}$ is convex. Otherwise, $\mathcal{H}$ is a hedgehog that exhibits singularities or self-intersections.
\end{rem}

The \textit{Steiner point} is a distinguished point associated with a hedgehog. It is an~affine-invariant center that plays a fundamental role in convex geometry, especially in connection with support functions and affine symmetrization processes.

\begin{defn}
Let $\mathcal{H}$ be a hedgehog. The \textit{Steiner point} of $\mathcal{H}$, denoted by $S(\mathcal{H})$, is the point
\[
S(\mathcal{H}) = \frac{1}{\pi} \int_0^{2\pi} h(s) \, u(s) \, \d s,
\]
where $u(s) = (\cos (s), \sin (s))$. This definition coincides with the classical notion of the Steiner point for convex bodies (see, e.g., \cite{SchneiderBook}), and extends naturally to hedgehogs via their support functions.
\end{defn}

Let $\mathcal{H}$ be a hedgehog. We denote by $\rot(\mathcal{H}, \alpha)$ the hedgehog obtained by rotating $\mathcal{H}$ counterclockwise about the origin by an angle $\alpha$. In particular, we define $\mathcal{H}^{\perp}:=\rot\left(\mathcal{H},\frac{\pi}{2}\right)$. If $h(s)$ is a support function of $\mathcal{H}$, then $h(s-\alpha)$ is a support function of $\rot\left(\mathcal{H},\alpha\right)$, albeit the parameter $s$ is translated. In particular:
\begin{align}
    \label{param}\mathcal{H}^\perp(s) = -\Big( h(s) \sin(s) + h'(s) \cos(s),  -h(s) \cos(s) + h'(s) \sin(s) \Big).
\end{align}

\begin{defn}
    Let $\mathcal{H}$ be a hedgehog. Then the \textit{width} of $\mathcal{H}$ in the direction $u(s)$, with $s\in[0,2\pi]$, is
    $$w(s) = h(s)+h(s+\pi),$$
    i.e., it is the oriented distance between the two support lines with normals $\pm u(s)$.
    Furthermore, the \textit{average width} of hedgehog $\mathcal{H}$ is
    $$\overline{w} = \frac{1}{\pi}\int_0^{\pi}w(s)\,\d s=\frac{1}{\pi}\int_0^{2\pi}h(s)\,\d s.$$
\end{defn}

The \textit{Steiner disk} $D_{\mathcal{H}}$ associated with a hedgehog $\mathcal{H}$ is the unique Euclidean disk centered at the Steiner point $S(\mathcal{H})$ and having radius $\frac{1}{2}\left|\overline{w}\right|$. In the degenerate case $\overline{w}=0$, the Steiner disk reduces to the single point $S(\mathcal{H})$.
Next, we introduce a~natural generalization of the concept of area of a region to the oriented area of a~closed curve.
\begin{defn}
Let $\mathcal{O}$ be an oriented smooth closed curve. The \textit{oriented area} of $\mathcal{O}$ is defined by
\begin{align*}
A_\mathcal{O}:=\frac{1}{2}\int_\mathcal{O}-y\,\d x+x\,\d y=\iint_{\mathbb{R}^2}w_\mathcal{O}(x,y)\,\d x\,\d y,
\end{align*}
where $w_\mathcal{O}(x,y)$ is the winding number of $\mathcal{O}$ about the point $(x,y)\in\mathbb{R}^2$.
\end{defn}

Now, since the support function $h$ is smooth and $2\pi$-periodic, we express it via its Fourier series expansion as follows:
\begin{align*}
    h(s)=a_0+\sum_{n=1}^{\infty}\left(a_n\cos \left(ns\right)+b_n\sin \left(ns\right)\right).
\end{align*}
One can check that the area of the oval $\mathcal{O}$ (\textit{Cauchy's formula}), the length of $\mathcal{O}$ (\textit{Blaschke's formula}), the width and average width of $\mathcal{O}$, and the Steiner point of $\mathcal{O}$ are described by the following formulae in terms of the coefficients of the Fourier series of its Minkowski support function $h$ (see Section 4.2 in \cite{G4}):
\begin{align}
\label{OAreaFourier}A_\mathcal{O} &=\frac{1}{2}\int_0^{2\pi}\Big(h^2(s)-h'^2(s)\Big)\,\d s=\pi a_0^2-\frac{\pi}{2}\sum_{\substack{n=2}}^{\infty}(n^2-1)(a_n^2+b_n^2),\\
\label{LengthFourier}L_\mathcal{O} &=\int_0^{2\pi}h(s)\,\d s =2\pi a_0,\\
\label{width} \overline{w} & = 2a_0, \\
\label{eq:SteinerPoint}S(\mathcal{O})&=(a_1,b_1).
\end{align}
Note that formulae \eqref{width} and \eqref{eq:SteinerPoint} are also valid
for a hedgehog $\mathcal{H}$ instead of an~oval $\mathcal{O}$.
We call the quantity 
\begin{equation}\label{isdef}
    L^2_\mathcal{O} - 4\pi A_\mathcal{O} = 2\pi^2\sum_{n=2}^{\infty}(n^2-1)(a_n^2 + b_n^2) 
\end{equation}
the \textit{isoperimetric deficit}.
Furthermore, in the case of a hedgehog $\mathcal{H}$, one gets an~analogous formula for its oriented area:
\begin{align}
\label{HAreaFourier}A_\mathcal{H} &=\frac{1}{2}\int_0^{2\pi}\Big(h^2(s)-h'^2(s)\Big)\,\d s=\pi a_0^2-\frac{\pi}{2}\sum_{\substack{n=2}}^{\infty}(n^2-1)(a_n^2+b_n^2).
\end{align}

In recent years, the study of hedgehogs has gained considerable attention within the
fields of convex geometry, differential geometry, and singularity theory (see, e.g., 
\cite{kwong, MMYLorentzian, MMYDR, Dominika, Mozgawa, Rochera2, Rochera1, SC1, Taba, Z2, ZCWMS} for both
Euclidean and non-Euclidean treatments). This renewed interest has led not only to new isoperimetric-type inequalities but also to systematic investigations of hedgehogs themselves, including their intrinsic quantities and the structure of their singularities.

Furthermore, hedgehog-related constructions have appeared in the formulation and refinement of generalized versions of the Gauss--Bonnet theorem, where curvature and topological invariants are analyzed through the lens of support functions and their harmonic structure. These developments underscore the growing role of hedgehogs as a unifying concept bridging classical geometry, Fourier analysis, and global geometric invariants (see \cite{DZgb, Z4}).


\section{The $k$th Order Preserving Set }\label{sec3}

\noindent In this section, we introduce and study a new class of geometric objects that emerge naturally from the Fourier analysis of support functions -- the $k$th Order Preserving Sets. These sets arise when one imposes a specific harmonic constraint on the support function of a hedgehog: namely, that only the positive-frequency Fourier components whose indices are divisible by a fixed integer $k$ are retained.

This seemingly simple restriction leads to rich geometric behavior and reveals surprising structural phenomena. The resulting sets generalize classical notions of symmetry, regularity, and curvature, while also offering a new perspective on the relationship between analytic representations (via the Fourier series) and geometric shape.

From a broader perspective, this construction can be seen as a type of Fourier projection onto a $k$-harmonic subspace, which preserves only selected symmetries and periodicities of the original curve. This idea resonates with classical problems in geometric tomography, spectral geometry, and affine differential geometry.

\begin{defn}\label{k-tuple pair}
Let $\mathcal{H}$ be a hedgehog and let $k>2$ be an integer. We call points
$p_1,\ldots,p_k\in\mathcal{H}$ an \textit{isogonal family of points} if, for each
$i=1,\ldots,k$ (with the convention $p_{k+1}=p_1$), the support lines to
$\mathcal{H}$ at $p_i$ and $p_{i+1}$ meet at the same angle, that is, $\frac{2\pi}{k}$.
Equivalently, the unit normals (compatible with the orientation of $\mathcal{H}$) at $p_1,\ldots,p_k$ are spaced by angles~$\frac{2\pi}{k}$.

Let $\mathcal{H}^{\perp}$ be the image of $\mathcal{H}$ under a counterclockwise
rotation by $\frac{\pi}{2}$ about the origin. Consider the parameterization
of $\mathcal{H}$ given by equation~\eqref{eqHedghehogForm} and the induced
parameterization of $\mathcal{H}^{\perp}$ as in~\eqref{param}. For $s\in[0,2\pi]$,
write $p=\mathcal{H}(s)$ and define $p^{\perp}=\mathcal{H}^{\perp}(s)$. We denote by
$p_1^{\perp},\ldots,p_k^{\perp}\in\mathcal{H}^{\perp}$ the images of an isogonal family
$p_1,\ldots,p_k\in\mathcal{H}$; in particular, for every $i$, the support line to
$\mathcal{H}^{\perp}$ at $p_i^{\perp}$ is perpendicular to the support line to
$\mathcal{H}$ at $p_i$.
\end{defn}
 
\begin{ex}
We consider an oval $\mathcal{O}$ with a support function $h(s) = 20 + \sin(2s) + \cos(3s) $ and $k=3$ (see Figure~\ref{fig: fig1}).

\begin{figure}[h]
\centering
\includegraphics[scale=0.22]{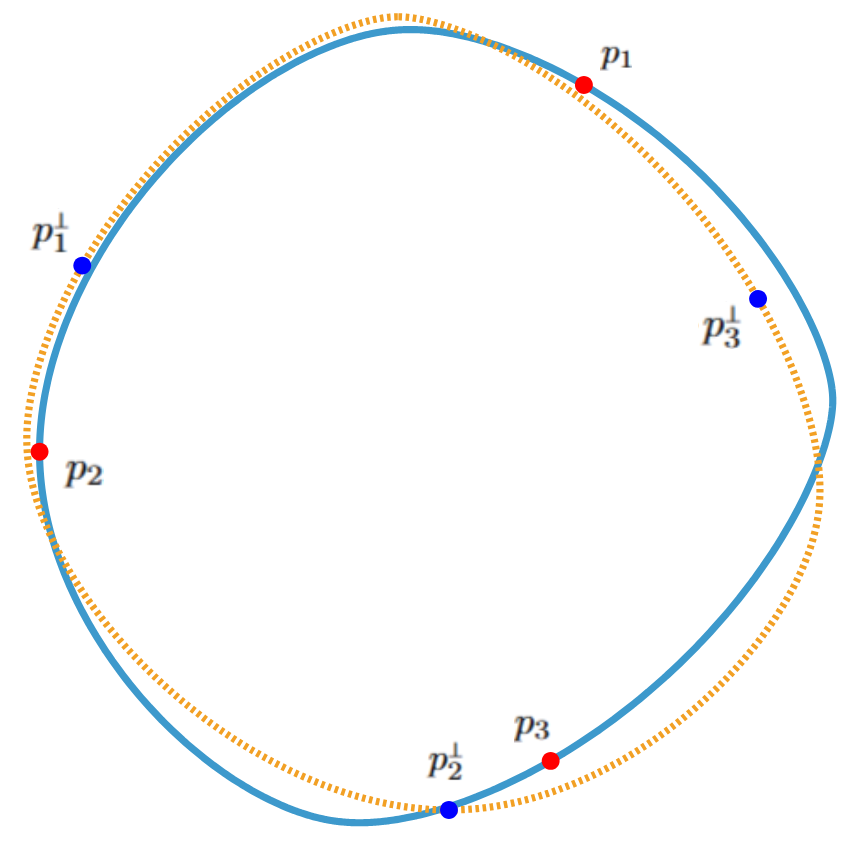}
\caption{An oval $\mathcal{O}$ (solid) with $p_1, p_2, p_3$ an isogonal family of points (red) and $\mathcal{O}^{\perp}$ (dashed) with $p_1^{\perp}, p_2^{\perp}, p_3^{\perp}$ (blue)} 
\label{fig: fig1}
\end{figure} 
\end{ex}
Note that, as specified in Definition~\ref{k-tuple pair}, the set of isogonal families 
of points on $\mathcal{H}$ constitutes a one-parameter family, naturally induced by the 
parameterization of $\mathcal{H}$. In particular, the point $p_1=\mathcal{H}(0)$ gives 
rise to an entire isogonal family. Proceeding along the parameter, we reach the isogonal 
family associated with each $p_1=\mathcal{H}\left(\tfrac{2\pi}{k}\right)$. Based on this 
observation, we introduce the titular set, built from this one-parameter family of 
isogonal points.

\begin{defn}\label{preserving set}
Consider a hedgehog $\mathcal{H}$. Then for a given integer $k>2$, the \textit{$k$th Order Preserving Set} is defined by
\begin{align}
\label{eq:preservdefn}\mathcal{P}_k:=\Bigg\{\frac{1}{k}\sum_{j=1}^{k}\Bigg(\cos\left(\frac{2\pi j}{k}\right)\cdot p_j-\sin\left(\frac{2\pi j}{k}\right)\cdot p_j^{\perp}\Bigg) -\frac{1}{2}\overline{w}\cdot \mathbbm{n}(p_1)\colon\\ 
\nonumber \{p_j\}_{j=1}^{j=k} \text{ is an isogonal family of points on $\mathcal{H}$ }\Bigg\},
\end{align}
where $\overline{w}$ denotes the average width of $\mathcal{H}$, and $\mathbbm{n}(p)$ is 
a continuous unit normal vector field to $\mathcal{H}$ in the direction given by the coorientation of the support line to $\mathcal{H}$ at the point $p$.
\end{defn}

Assume that the coordinates of the point $p_j$ are $(x_j,y_j)$. Then, clearly, 
$p_j^\perp = (-y_j,x_j)$. Hence, we see that
{\footnotesize$$\cos\left(\frac{2\pi j}{k}\right)\cdot p_j-\sin\left(\frac{2\pi j}{k}\right)\cdot p_j^{\perp} = \left(\cos\left(\frac{2\pi j}{k}\right)x_j + \sin\left(\frac{2\pi j}{k}\right)y_j,\cos\left(\frac{2\pi j}{k}\right)y_j - \sin\left(\frac{2\pi j}{k}\right)x_j\right).$$}
Now, recalling the standard rotation formula
\[
\rot\big((x,y),\alpha\big) = \big(\cos(\alpha)\, x - \sin(\alpha)\, y,\,
\cos(\alpha)\, y + \sin(\alpha)\, x\big),
\]
we obtain the following equivalent formulation of \eqref{eq:preservdefn}:
\begin{align*} \mathcal{P}_k=&\Bigg\{\frac{1}{k}\sum_{j=1}^{k}\rot\left(p_j,-\frac{2\pi j}{k}\right)-\frac{1}{2}\overline{w}\cdot \mathbbm{n}(p_1)\colon\\ &\{p_j\}_{j=1}^{j=k} \text{ is an isogonal family of points of $\mathcal{H}$ }\Bigg\}. \end{align*}

\begin{rem}\label{remCWMS}
In this paper, we focus on the case $k>2$. For $k=2$, the corresponding “isogonal family” 
can be interpreted as a pair of distinct points $p_1,p_2\in\mathcal{H}$ whose support 
lines are parallel (a so-called \textit{parallel pair}). In this situation, the associated $2$nd 
Order Preserving Set $\mathcal{P}_2$ coincides with a homothetic copy of the Constant 
Width Measure Set with ratio $\frac{1}{2}$, which has been defined and studied in \cite{ZCWMS}, and then generalized in \cite{SC1, Z3}.
\end{rem}

\begin{ex}
    Consider an oval $\mathcal{O}$ with a support function $h(s) = 30 + \sin(2s) + \cos(3s)+ \cos(4s) $ and its $3$rd and $4$th Order Preserving Sets (see Figure~\ref{fig: fig2}). Note that the number of cusps in these examples is divisible by $k$ as it should be (see~Proposition~ \ref{singularitiesofT}).

\begin{figure}[h]
\centering
\includegraphics[scale=0.27]{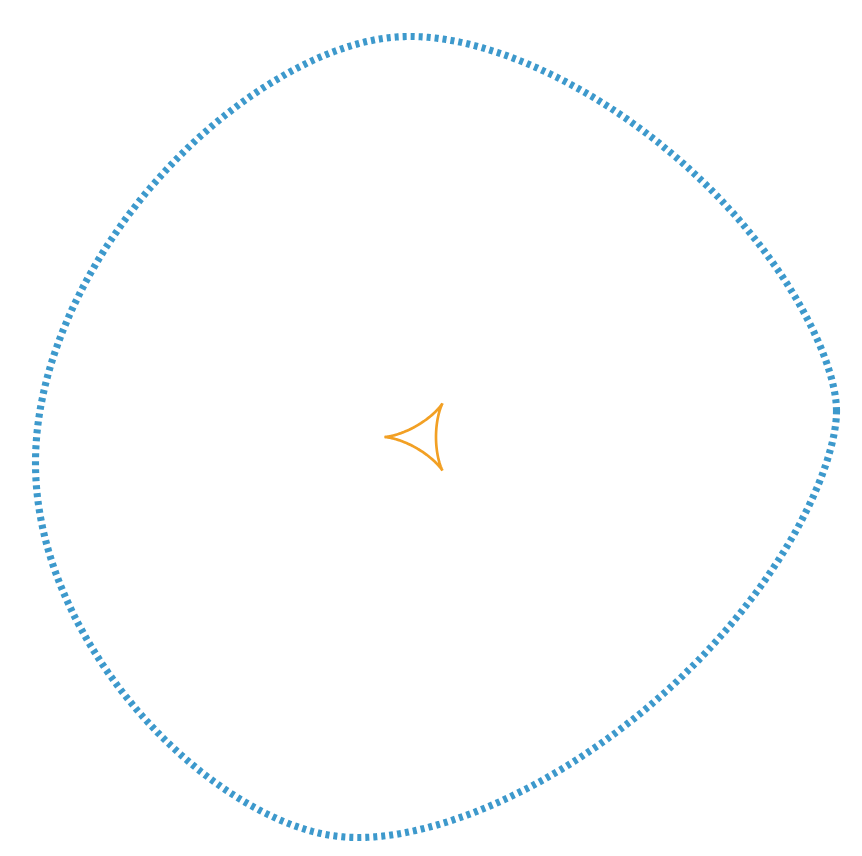}
\includegraphics[scale=0.27]{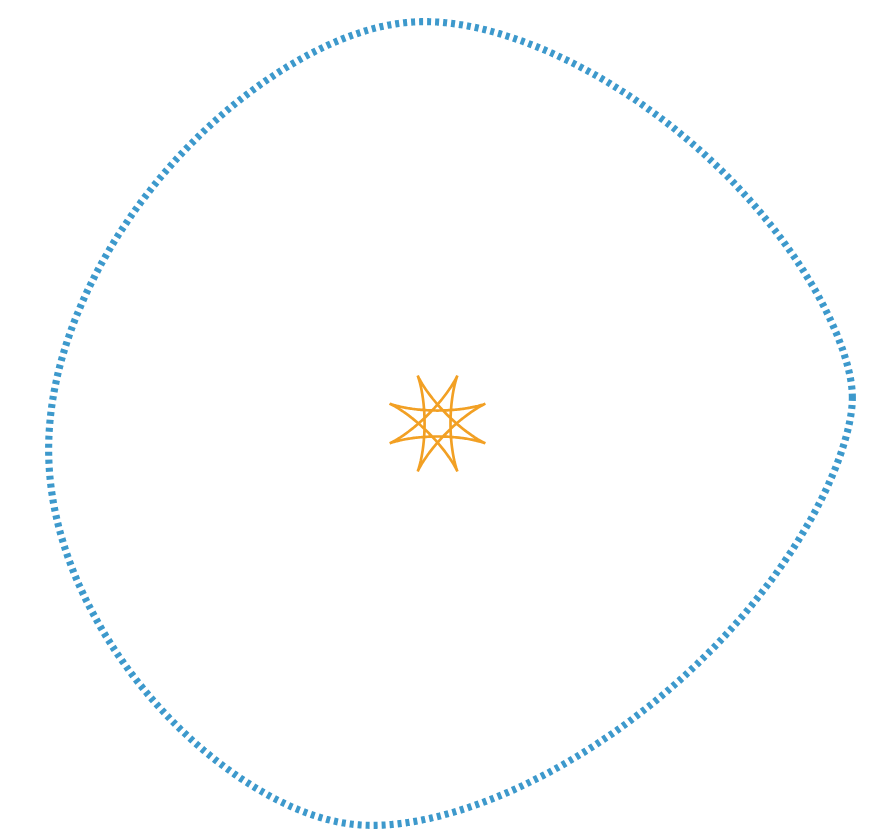}
\caption{On the left, oval $\mathcal{O}$ (blue, dashed) with its $3$rd Order Preserving Set (orange); 
on the right, $\mathcal{O}$ with its $4$th Order Preserving Set (orange)}
\label{fig: fig2}
\end{figure} 
\end{ex}

\begin{ex}
    Consider a singular hedgehog $\mathcal{H}$ with a support function \linebreak $h(s) = 10 + \sin(2s) + \cos(3s)+ \cos(6s) $ and its $3$rd Order Preserving Set (see Figure~\ref{fig: fig3}).

    \begin{figure}[h]
\centering
\includegraphics[scale=0.29]{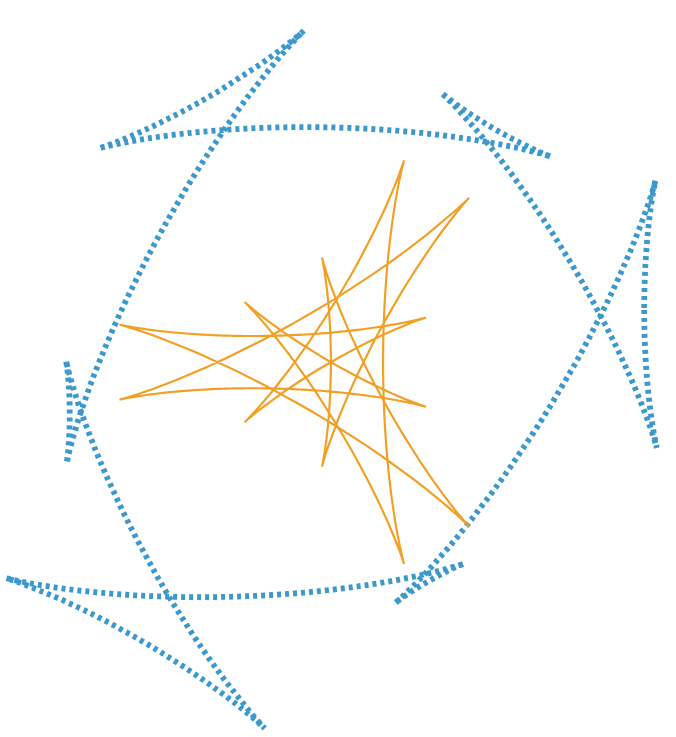}
\caption{A hedgehog $\mathcal{H}$ (blue, dashed) and its $3$rd Order Preserving Set (orange)}
\label{fig: fig3}
\end{figure} 
\end{ex}

\begin{prop}\label{Preservingsuppth}
Let $\mathcal{H}$ be a hedgehog with the support function $h: [0,2\pi] \to \mathbb{R}$, where $h(s)=a_0+\sum_{n=1}^{\infty}a_n\cos (ns)+b_n\sin (ns)$ is its Fourier series representation, and let the average width of $\mathcal{H}$ be $\overline{w}$. Then, its $k$th Order Preserving Set is a~hedgehog with a support function given by the formula:
$$ h_k(s) = \frac{1}{k}\sum_{j=0}^{k-1}h\left(s+\frac{2\pi j}{k}\right) - \frac{1}{2}\overline{w},$$
and its Fourier series representation is as follows:
\begin{align}
    \label{eqFourierkOPS}h_k(s) = \sum_{k \mid n,\ n > 1}^\infty a_n\cos(ns) + b_n\sin(ns).
\end{align}
\end{prop}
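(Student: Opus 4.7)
My plan is to pick a one-parameter parametrization of the isogonal families on $\mathcal{H}$ and show that the expression defining $\mathcal{P}_k$ collapses, under it, to the standard hedgehog parametrization \eqref{eqHedghehogForm} with support function $h_k$. Fix $s\in[0,2\pi]$ and label the isogonal family cyclically so that $\mathbbm{n}(p_1)=u(s)$ and each $p_j$ is positioned on $\mathcal{H}$ with parameter $s_j$ satisfying $s_j-\theta_j=s$, where $\theta_j:=\tfrac{2\pi j}{k}$ is the rotation angle appearing in the definition; the isogonal condition together with $2\pi$-periodicity make this cyclic indexing well-defined.

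The main algebraic step I would carry out uses the identity $\cos(\theta_j)p_j-\sin(\theta_j)p_j^\perp=\rot(p_j,-\theta_j)$ recalled in the excerpt, together with $\rot(u(t),\alpha)=u(t+\alpha)$, $\rot(u'(t),\alpha)=u'(t+\alpha)$, and the representation $p_j=h(s_j)u(s_j)+h'(s_j)u'(s_j)$ coming from \eqref{eqHedghehogForm}. Combining these yields
\[
\rot(p_j,-\theta_j)\;=\;h(s_j)\,u(s_j-\theta_j)+h'(s_j)\,u'(s_j-\theta_j)\;=\;h(s_j)\,u(s)+h'(s_j)\,u'(s),
\]
the decisive cancellation that synchronizes every summand to the common frame $\{u(s),u'(s)\}$. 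Averaging over $j$ and using $2\pi$-periodicity to reindex $\tfrac{1}{k}\sum_{j=1}^{k}$ as $\tfrac{1}{k}\sum_{j=0}^{k-1}$ then produces $\phi(s)\,u(s)+\phi'(s)\,u'(s)$ with $\phi(s):=\tfrac{1}{k}\sum_{j=0}^{k-1}h\!\left(s+\tfrac{2\pi j}{k}\right)$; subtracting $\tfrac{1}{2}\overline{w}\,u(s)$ gives $\mathcal{P}_k(s)=(\phi(s)-\tfrac{1}{2}\overline{w})\,u(s)+\phi'(s)\,u'(s)$, which by \eqref{eqHedghehogForm} is the hedgehog parametrization with support function $h_k=\phi-\tfrac{1}{2}\overline{w}$.

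For the Fourier expansion, I would substitute the series for $h$ into $\phi$. For each $n\geq 1$, the inner average reduces to the root-of-unity sum $\tfrac{1}{k}\sum_{j=0}^{k-1}e^{i2\pi nj/k}$, which equals $1$ if $k\mid n$ and $0$ otherwise; hence only harmonics with $k\mid n$ survive in $\phi$, while the constant $a_0$ is preserved. Since $\tfrac{1}{2}\overline{w}=a_0$ by \eqref{width}, subtracting cancels the constant and leaves $h_k(s)=\sum_{k\mid n,\ n\geq 1}(a_n\cos(ns)+b_n\sin(ns))$; as $k>2$, every surviving $n$ satisfies $n>1$, which matches \eqref{eqFourierkOPS}. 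The main conceptual obstacle is recognizing the alignment $s_j-\theta_j=s$: once that identity is in hand, all $k$ rotated summands collapse into a single hedgehog parametrization, and everything else is a routine Fourier averaging via roots of unity.
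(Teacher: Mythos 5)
Your proposal is correct and follows essentially the same route as the paper: parametrize the isogonal family as $p_j=\mathcal{H}\bigl(s+\tfrac{2\pi j}{k}\bigr)$, collapse the rotated summands onto the common frame $\{u(s),u'(s)\}$ to read off the hedgehog form with $h_k=\tfrac{1}{k}\sum_j h\bigl(s+\tfrac{2\pi j}{k}\bigr)-\tfrac{1}{2}\overline{w}$, and then apply the roots-of-unity averaging to the Fourier series. The only difference is cosmetic: you invoke linearity of rotation and $\rot(u(t),\alpha)=u(t+\alpha)$ where the paper expands the trigonometric identities explicitly.
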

\begin{proof}
 Note that the points $\mathcal{H}(s),\ldots,\mathcal{H}\left(s+\frac{2\pi(k-1)}{k}\right)$, for a given $s\in[0,2\pi]$, constitute an isogonal family of $\mathcal{H}$. Then, the parameterization of the $k$th Order Preserving Set is:
    $$\mathcal{P}_k(s) =\frac{1}{k}\sum_{j=1}^{k}\Bigg(\cos\left(\frac{2\pi j}{k}\right)\cdot \mathcal{H}\left(s+\frac{2\pi j}{k}\right)-\sin\left(\frac{2\pi j}{k}\right)\cdot \mathcal{H}^\perp\left(s+\frac{2\pi j}{k}\right)\Bigg) -\frac{1}{2}\overline{w}\cdot u(s).$$\\Naturally, the point $\mathcal{H}^\perp(s+\frac{2\pi j}{k})$ is understood as in Definition \ref{k-tuple pair}. Using the parameterization \eqref{param} of $\mathcal{H}^\perp$ we obtain the following equalities:
    {\footnotesize
    \begin{align*}    
    \mathcal{P}_k&(s) = \frac{1}{k}\sum_{j=1}^{k}\Bigg(\cos\Big(\frac{2\pi j}{k}\Big)\cdot \Bigg( h\Big(s+\frac{2\pi j}{k}\Big)\cos\Big(s+\frac{2\pi j}{k}\Big) - h'\Big(s+\frac{2\pi j}{k}\Big) \sin\Big(s+\frac{2\pi j}{k}\Big),\\&  h\Big(s+\frac{2\pi j}{k}\Big) \sin\Big(s+\frac{2\pi j}{k}\Big) + h'\Big(s+\frac{2\pi j}{k}\Big) \cos\Big(s+\frac{2\pi j}{k}\Big) \Bigg)+\\&+\frac{1}{k}\sum_{j=1}^{k}\Bigg(\sin\Big(\frac{2\pi j}{k}\Big)\cdot \Bigg( h\Big(s+\frac{2\pi j}{k}\Big) \sin\Big(s+\frac{2\pi j}{k}\Big) + h'\Big(s+\frac{2\pi j}{k}\Big) \cos\Big(s+\frac{2\pi j}{k}\Big),\\&   -h\Big(s+\frac{2\pi j}{k}\Big) \cos\Big(s+\frac{2\pi j}{k}\Big) + h'\Big(s+\frac{2\pi j}{k}\Big) \sin\Big(s+\frac{2\pi j}{k}\Big) \Bigg)\Bigg)-\frac{1}{2}\overline{w}\cdot \big(\cos(s),\sin(s)\big)\\&=\frac{1}{k}\sum_{j=1}^{k}\Bigg( \Bigg( h\Big(s+\frac{2\pi j}{k}\Big)\cos(s) - h'\Big(s+\frac{2\pi j}{k}\Big) \sin(s),\\&  h\Big(s+\frac{2\pi j}{k}\Big) \sin(s) + h'\Big(s+\frac{2\pi j}{k}\Big) \cos(s) \Bigg)-\frac{1}{2}\overline{w}\cdot \big(\cos(s),\sin(s)\big)\\&= \Big(\cos(s),\sin(s)\Big)\cdot \Bigg(\frac{1}{k}\sum_{j=1}^{k} h\Big(s+\frac{2\pi j}{k}\Big)-\frac{1}{2}\overline{w}\Bigg) + \Big(-\sin(s),\cos(s)\Big)\cdot \Bigg(\frac{1}{k}\sum_{j=1}^{k} h'\Big(s+\frac{2\pi j}{k}\Big)\Bigg).
    \end{align*}}

    Now, setting $h_k(s) = \frac{1}{k}\sum_{j=0}^{k-1}h\left(s+\frac{2\pi j}{k}\right) - \frac{1}{2}\overline{w}$, we obtain $P_k(s)$ in the form of equation \eqref{eqHedghehogForm}. Therefore, we end the first part of the proof. To derive equation \eqref{eqFourierkOPS}, one can simplify the aforementioned form of $h_k$ by leveraging the properties of Fourier and geometric series. Let us consider the Fourier expansion of $h$:
    $$h(s) = \sum_{n=-\infty}^\infty c_n{\e}^{\i ns}.$$
    Hence:
     $$h\left(s+\frac{2\pi j}{k}\right) = \sum_{n=-\infty}^\infty c_n{\e}^{\i n\left(s+\frac{2\pi j}{k}\right)}.$$
     This series is convergent. Thus, the following is well-defined:
     $$\sum_{j=0}^{k-1}h\left(s+\frac{2\pi j}{k}\right) = \sum_{j=0}^{k-1}\sum_{n=-\infty}^\infty c_n{\e}^{\i n\left(s+\frac{2\pi j}{k}\right)}=\sum_{n=-\infty}^\infty \sum_{j=0}^{k-1} c_n{\e}^{\i n\left(s+\frac{2\pi j}{k}\right)}.$$ Notice that $\sum_{n=-\infty}^\infty \sum_{j=0}^{k-1} c_n{\e}^{\i n\left(s+\frac{2\pi j}{k}\right)} = \sum_{n=-\infty}^\infty c_n{\e}^{\i ns}\sum_{j=0}^{k-1} {\e}^{\i\frac{2\pi jn}{k}}$. We shall now simplify the geometric series:
     $$\sum_{j=0}^{k-1} {\e}^{\i\frac{2\pi jn}{k}} = \sum_{j=0}^{k-1} \left({\e}^{\i\frac{2\pi n}{k}}\right)^j = \begin{cases} 
k & \text{if } k\mid n, \\ 
0 & \text{if } k\nmid n. 
\end{cases}$$ Therefore, we arrive at:
$$\sum_{j=0}^{k-1}h\left(s+\frac{2\pi j}{k}\right) = k\sum_{k\mid n} c_n{\e}^{\i ns}.$$ Translating the result into trigonometric series one gets that:
\begin{align*}
\frac{1}{k}\sum_{j=0}^{k-1}h\left(s+\frac{2\pi j}{k}\right) - \frac{1}{2}\overline{w} &= \sum_{k \mid n} a_n\cos(ns) + b_n\sin(ns) - a_0\\ &= \sum_{k \mid n,\ n > 1}a_n\cos(ns) + b_n\sin(ns),
\end{align*}
which concludes the proof.
    \end{proof}
    
By Proposition \ref{Preservingsuppth}, we may parameterize the $k$th Order Preserving Set as follows:
\begin{align}
    \label{eq:pkparameterization}\mathcal{P}_k(s)= \Big( h_k(s) \cos(s) - h_k'(s) \sin(s),  h_k(s) \sin(s) + h_k'(s) \cos(s) \Big),\ s\in[0,2\pi].
\end{align}

\begin{rem}\label{remSteiBall}
    It is worth noting that as $k$ tends to infinity, the Fourier coefficients of $h_k$ vanish. Hence, the limit $\mathcal{P}_\infty$ is the origin.
\end{rem}

\begin{rem}\label{Othersupp}
Consider the Fourier expansion of the support function $h(s) = a_0 + \sum_{n = 1}^\infty a_n\cos(ns) + b_n\sin(ns), \; s\in[0,2\pi]$. By Proposition \ref{Preservingsuppth}, the support function $h_k$ of the $k$th Order Preserving Set only preserves the Fourier indices divisible by $k$, with the exception of $0$. Thus, by equation \eqref{eqFourierkOPS} one gets that:
$$h_k'(s) = \sum_{k \mid n,\; n > 1} n\Big(b_n\cos(ns)-a_n\sin(ns)\Big)$$ and
$$h_k''(s) =-\sum_{k \mid n,\; n > 1}n^2\Big(a_n\cos(ns) + b_n\sin(ns)\Big).$$
\end{rem}

\begin{rem}
Let $\mathcal{H}$ be a hedgehog.
 By analysis of Fourier series, it is evident that $\mathcal{P}_k$ is a point if and only if the support function of $\mathcal{H}$ is of the form: $h(s) = a_0 + \sum_{ k\nmid n }a_n \cos(ns) +b_n \sin(ns)$.
By Definition \ref{preserving set}, the set $\mathcal{P}_k$ is a point if and only if $\frac{1}{k}\sum_{j=1}^{k}\left(\cos\left(\frac{2\pi j}{k}\right)\cdot p_j -\sin\left(\frac{2\pi j}{k}\right)\cdot p_j^\perp\right)=\frac{1}{2}\overline{w}\cdot \mathbbm{n}(p_1)$ for all isogonal families of points $p_1,\ldots, p_k$, which means that the length of the vector $$\sum_{j=1}^{k}\left(\cos\left(\frac{2\pi j}{k}\right)\cdot p_j -\sin\left(\frac{2\pi j}{k}\right)\cdot p_j^\perp\right)$$ is always constant and equal to $\frac{k\overline{w}}{2}$.
\end{rem}

\begin{prop}\label{PropTangentLins}
Consider a hedgehog $\mathcal{H}$, its $k$th Order Preserving Set $\mathcal{P}_k$, and their respective parameterizations. Then, the support line to $\mathcal{H}$ at $\mathcal{H}(s)$ (for some $s\in[0,2\pi]$) is parallel to the support line to $\mathcal{P}_k$ at the point $\mathcal{P}_k(s)$.
 \end{prop}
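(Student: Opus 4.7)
The proof should be essentially a direct consequence of Proposition~\ref{Preservingsuppth} together with the geometric characterization of support lines of a hedgehog. My approach is to observe that $\mathcal{P}_k$ is itself a hedgehog, parameterized by the same angular variable $s$ as $\mathcal{H}$, and then invoke the fact that for any hedgehog the support line at the point with parameter $s$ is by definition the line perpendicular to $u(s)$.

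\textbf{Steps.} First, I would recall that by Proposition~\ref{Preservingsuppth} the set $\mathcal{P}_k$ is a hedgehog whose support function is $h_k$, and that formula~\eqref{eq:pkparameterization} gives its parameterization in exactly the standard hedgehog form
\[
\mathcal{P}_k(s)=h_k(s)\,u(s)+h_k'(s)\,u'(s),
\]
with $u(s)=(\cos s,\sin s)$. By definition (see the discussion following~\eqref{eqHedghehogForm}), the support line to a hedgehog at the point corresponding to parameter $s$ is the line through that point orthogonal to $u(s)$. Applying this to $\mathcal{H}$ at $\mathcal{H}(s)$ and to $\mathcal{P}_k$ at $\mathcal{P}_k(s)$, both support lines are orthogonal to the same unit vector $u(s)$, and are therefore parallel.

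\textbf{Main obstacle.} There is essentially no analytic obstacle once one has Proposition~\ref{Preservingsuppth}, since the result is then a tautological consequence of the hedgehog parameterization convention: the real content lies in verifying that the parameterization produced by the isogonal averaging is indeed the natural hedgehog parameterization associated with $h_k$, which has already been done. The only subtlety worth remarking on is that the statement requires the same parameter value $s$ to be used on both curves; this is legitimate precisely because the derivation in Proposition~\ref{Preservingsuppth} preserves the angular parameter when passing from $\mathcal{H}$ to $\mathcal{P}_k$.
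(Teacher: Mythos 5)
Your proposal is correct and follows essentially the same route as the paper: both arguments reduce the claim to Proposition~\ref{Preservingsuppth}, which shows that $\mathcal{P}_k$ is the hedgehog with support function $h_k$ parameterized by the same angular variable $s$, so that the support lines of $\mathcal{H}$ at $\mathcal{H}(s)$ and of $\mathcal{P}_k$ at $\mathcal{P}_k(s)$ are both orthogonal to $u(s)$. The only cosmetic difference is that the paper makes this explicit by differentiating both parameterizations to exhibit tangent vectors proportional to $(-\sin s,\cos s)$, whereas you appeal directly to the definition of the support line, which is equally valid (and avoids any issue at singular points where the derivative vanishes).
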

 \begin{proof}
Differentiating the parameterizations at some \(s \in [0, 2\pi]\) immediately yields the desired result:
\begin{align*}
\mathcal{H}'(s) &=\Big(h(s)+h''(s)\Big)\cdot\Big(-\sin(s),\cos(s)\Big),\\
\mathcal{P}'_k(s) &=\Big(h_k(s)+h''_k(s)\Big)\cdot\Big(-\sin(s),\cos(s)\Big).
\end{align*}  
 \end{proof}

 \begin{figure}[h]
\centering
\includegraphics[scale=0.4]{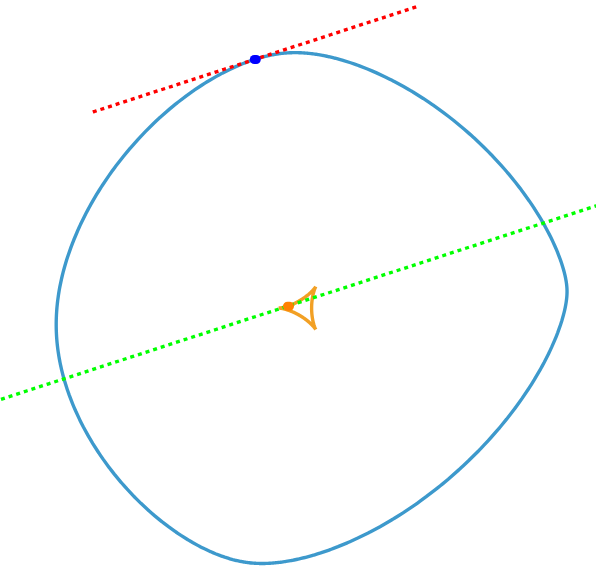}
\caption{Consider an oval $\mathcal{O}$ with a support function $h(s) = 30 + \sin(2s) + \cos(3s)+ \cos(4s) $. The support line to $\mathcal{O}$ at $\mathcal{O}(s)$ is parallel to the support line to $\mathcal{P}_3$ at $\mathcal{P}_3\left(s\right)$  }
\label{fig: fig4}
\end{figure} 
 
We illustrate Proposition \ref{PropTangentLins} in Figure~\ref {fig: fig4}.

\noindent Let us now state a simple observation considering the behavior of periodic functions:
\begin{prop}\label{kperiodic}
    Consider a $2\pi$-periodic smooth function $f$ and a natural number $k > 2$. The function $f$ is $\frac{2\pi}{k}$-periodic if and only if 
    \begin{align}
    \label{eq:2pikperiodic}\forall{x\in[0,2\pi]}\ f(x) = a_0 + \sum_{k\mid n, \;n>1}a_n\cos(nx)+b_n\sin(nx).\end{align}
\end{prop}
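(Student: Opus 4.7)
\medskip

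The plan is to prove both implications via the Fourier series machinery already developed in the proof of Proposition \ref{Preservingsuppth}, where the key geometric-series identity $\sum_{j=0}^{k-1} \e^{\i\, 2\pi j n/k} = k$ if $k \mid n$ and $0$ otherwise was established.

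For the implication ($\Leftarrow$), I would simply substitute $x + \tfrac{2\pi}{k}$ into the right-hand side of \eqref{eq:2pikperiodic} and observe that whenever $k \mid n$, the integer $n/k$ satisfies
\begin{align*}
\cos\!\left(n\Big(x+\tfrac{2\pi}{k}\Big)\right) &= \cos\!\left(nx + 2\pi\,\tfrac{n}{k}\right) = \cos(nx),\\
\sin\!\left(n\Big(x+\tfrac{2\pi}{k}\Big)\right) &= \sin(nx),
\end{align*}
so $f(x + \tfrac{2\pi}{k}) = f(x)$ termwise. Smoothness of $f$ ensures uniform convergence, so the termwise manipulation is legitimate.

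For the implication ($\Rightarrow$), the most efficient route is to invoke the averaging identity already derived inside the proof of Proposition \ref{Preservingsuppth}. Writing $f(x) = \sum_{n\in\mathbb{Z}} c_n \e^{\i nx}$ in complex Fourier form, that computation shows
\[
\frac{1}{k}\sum_{j=0}^{k-1} f\!\left(x+\tfrac{2\pi j}{k}\right) \;=\; \sum_{k\mid n} c_n\, \e^{\i nx}.
\]
If $f$ is $\tfrac{2\pi}{k}$-periodic, the left-hand side collapses to $f(x)$, so $c_n = 0$ for every $n$ with $k \nmid n$. Converting back to the trigonometric form and isolating the $n=0$ term as $a_0$ yields exactly \eqref{eq:2pikperiodic}.

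There is no real obstacle here: the statement is a standard fact about subgroups of the circle and the associated Fourier characters, and the work has effectively been done in the preceding proof. The only point requiring a small amount of care is the justification for exchanging the sum and the shift in the $(\Leftarrow)$ direction, which is immediate from the smoothness (hence rapid decay of Fourier coefficients) of $f$, and the observation that uniqueness of Fourier coefficients forces $c_n = 0$ in the $(\Rightarrow)$ direction.
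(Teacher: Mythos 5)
Your proposal is correct and follows essentially the same route as the paper: the $(\Leftarrow)$ direction by termwise substitution of $x+\tfrac{2\pi}{k}$, and the $(\Rightarrow)$ direction by averaging $f$ over the shifts $x+\tfrac{2\pi j}{k}$ and applying the geometric-series identity $\sum_{j=0}^{k-1}\e^{\i 2\pi jn/k}=k$ or $0$ according to whether $k\mid n$. Your phrasing via the averaging operator and uniqueness of Fourier coefficients is a slightly cleaner packaging of the identical computation the paper carries out.
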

\begin{proof}
Let us start by observing that $f$ being of the form \eqref{eq:2pikperiodic}, implies \linebreak $\frac{2\pi}{k}$-periodicity because:
\begin{align*}
    f\left(x+\frac{2\pi}{k}\right) &= a_0 +\sum_{k\mid n, \;n>1}a_n\cos\left(nx+n\frac{2\pi}{k}\right)+b_n\sin\left(nx+ n\frac{2\pi}{k}\right) \\&= a_0+\sum_{k\mid n, \;n>1}a_n\cos(nx)+b_n\sin(nx) = f(x).
\end{align*}
    Since $f$ is a $2\pi$-periodic, smooth function, we may consider its Fourier series:
    $$ f(x) = a_0 + \sum_{n=-\infty}^\infty c_n{\e}^{\i nx}$$
    Let $f$ be a $\frac{2\pi}{k}$-periodic function, hence for all $x \in [0,2\pi]$ we have:
    $$f(x) = f\left(x + \frac{2\pi}{k}\right)= \cdots=f\left(x + \frac{2\pi(k-1)}{k}\right) .$$
    Hence, we see the following:
    $$kf(x) = f\left(x + \frac{2\pi}{k}\right)+ \cdots+f\left(x + \frac{2\pi k}{k}\right).$$
    Consider the Fourier expansion of $f$, one gets the following:
    \begin{align*}
        ka_0 + k\sum_{n=-\infty}^\infty c_n{\e}^{\i nx}&=a_0 + \sum_{n=-\infty}^\infty c_n{\e}^{\i n(x+\frac{2\pi}{k})} +\cdots +a_0 + \sum_{n=-\infty}^\infty c_n{\e}^{\i n\left(x+\frac{2\pi k}{k}\right)}\\
        &= ka_0 + \sum_{n = -\infty}^\infty c_n {\e}^{\i nx}\sum_{j=1}^{k}{\e}^{\i n\frac{2\pi j}{k}}.
    \end{align*}
    Leveraging the properties of geometric series, we obtain:
    \begin{align*}
        k\sum_{n=-\infty}^\infty c_n{\e}^{\i nx}
        &= k\sum_{k\mid n} c_n {\e}^{\i nx} 
        .
    \end{align*}
    This means that the Fourier series preserves only indices that are divisible by $k$. Rewriting this exponential Fourier series into a trigonometric one, immediately yields the desired result.
\end{proof}

\begin{prop}\label{singularitiesofT}
  Let $\mathcal{H}$ be a hedgehog and let $k>2$. Then, the $k$th Order Preserving Set of $\mathcal{H}$, $\mathcal{P}_k$, is singular. Moreover, if $\mathcal{P}_k$ has finitely many singular points, the number of them is divisible by $k$.
\end{prop}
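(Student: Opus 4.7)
The plan is to read singularities of $\mathcal{P}_k$ directly off the parameterization in \eqref{eq:pkparameterization}. Differentiating gives
\[
\mathcal{P}_k'(s) = \bigl(h_k(s)+h_k''(s)\bigr)\,\bigl(-\sin(s),\cos(s)\bigr),
\]
exactly as in the proof of Proposition~\ref{PropTangentLins}. Therefore $s$ is a singular parameter of $\mathcal{P}_k$ if and only if $g(s):=h_k(s)+h_k''(s)=0$.

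The first assertion will follow from a zero-mean argument. By Proposition~\ref{Preservingsuppth} and Remark~\ref{Othersupp},
\[
g(s)=\sum_{k\mid n,\,n>1}(1-n^2)\bigl(a_n\cos(ns)+b_n\sin(ns)\bigr),
\]
which is a continuous, $2\pi$-periodic function with vanishing mean (no constant term in its Fourier series). Hence either $g\equiv 0$, in which case $h_k$ itself has no admissible Fourier modes (there are no terms of index $0$ or $\pm 1$) and so $h_k\equiv 0$, making $\mathcal{P}_k$ collapse to a single point (which is degenerate/singular in the trivial sense), or $g$ changes sign and has at least one zero, giving a genuine singularity of $\mathcal{P}_k$.

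For the divisibility claim, the key point is the $\tfrac{2\pi}{k}$-periodicity of $h_k$, which is immediate from the Fourier formula \eqref{eqFourierkOPS} (or Proposition~\ref{kperiodic}); it descends to $g$, so the zero set of $g$ is invariant under the shift $s\mapsto s+\tfrac{2\pi}{k}$. A direct consequence of the rotated parameterization is the $C_k$-symmetry
\[
\mathcal{P}_k\Bigl(s+\tfrac{2\pi}{k}\Bigr)=\rot\!\left(\mathcal{P}_k(s),\tfrac{2\pi}{k}\right),
\]
obtained by pulling the rotation out of the trigonometric factors in \eqref{eq:pkparameterization}. Thus singular parameters come in $C_k$-orbits of size $k$, and if the total number of singular parameters is finite, it is divisible by $k$. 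The images of such an orbit are $k$ points related by the rotation of order $k$ about the origin.

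The step requiring care will be translating divisibility of singular \emph{parameters} into divisibility of singular \emph{points in the image}. The only way a $C_k$-orbit of parameters could collapse to fewer image points is if $\mathcal{P}_k(s_0)$ is fixed by the rotation by $\tfrac{2\pi}{k}$, which forces $\mathcal{P}_k(s_0)=0$. I expect this to be the main obstacle, and I would handle it by noting that in the generic situation this does not occur, so each orbit of singular parameters contributes exactly $k$ distinct singular points; alternatively, under the stated ``finitely many singular points'' hypothesis the curve is not identically degenerate, and after possibly removing the (unique) orbit collapsing to the origin the remaining singular points partition into $C_k$-orbits, so their total count is divisible by $k$.
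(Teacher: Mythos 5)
Your core argument is the same as the paper's: differentiate the parameterization \eqref{eq:pkparameterization} to reduce singularities to zeros of $g=h_k+h_k''$, use the vanishing mean of $g$ over $[0,2\pi]$ to get at least one zero, and use the $\tfrac{2\pi}{k}$-periodicity of $g$ (via \eqref{eqFourierkOPS} and Proposition~\ref{kperiodic}) to conclude that zeros come in $k$-tuples that are distinct modulo $2\pi$. That part is correct and complete, including your observation that $g\equiv 0$ forces $h_k\equiv 0$ (since every admissible index $n$ satisfies $n\geqslant k>2$, so $1-n^2\neq 0$).

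Your last paragraph, however, addresses a distinction the statement does not make, and the resolution you sketch there would not actually work. By the conventions of Section~\ref{sec2}, a ``singular point'' of a map $S^1\to\mathbb{R}^2$ is a parameter value where the velocity vanishes (this is how cusps are defined there, via the source), so the quantity being counted is the number of zeros of $g$ in $[0,2\pi)$, and your $C_k$-orbit argument already finishes the proof; no passage to image points is needed. If one \emph{did} insist on counting distinct image points, an appeal to genericity proves nothing for a fixed hedgehog, and ``removing the orbit that collapses to the origin'' does not restore divisibility: a count of the form $1+mk$ is not divisible by $k$. Indeed, the image-point version of the claim can genuinely fail -- e.g.\ for $k=3$ and $h(s)=\sin(6s)-2\sin(3s)$ one finds $12$ singular parameters, of which the orbit $\{0,\tfrac{2\pi}{3},\tfrac{4\pi}{3}\}$ satisfies $h_3=h_3'=0$ and hence collapses to the origin, leaving $10$ distinct singular image points. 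So you should simply delete the final paragraph and state that singular points are counted in the parameter domain, as in the paper.
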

\begin{proof}
    Consider the parameterization \eqref{eq:pkparameterization} of $\mathcal{P}_k$. Differentiating this parameterization, one gets that
    $$\mathcal{P}_k'(s)= \Big(h_k(s) + h_k''(s)\Big)\cdot\Big(-\sin(s),\cos(s)\Big).$$

    Let $\rho(s):=h_k(s)+h_k''(s)$. Note that a zero of $\rho$ corresponds to a singular point of $\mathcal{P}_k$. Since the Fourier series of $h_k$ is \eqref{eqFourierkOPS}, by Proposition \ref{kperiodic}, it is easy to see that the function $h_k$ is $\frac{2\pi}{k}$-periodic. The same holds for $h''_k$, and then for $\rho$. Moreover,
    $$\int_0^{2\pi} \rho(s)\,\d s
  =\int_0^{2\pi} h_k(s)\,\d s
   +\underbrace{\int_0^{2\pi} h_k''(s)\,\d s}_{=h_k'(2\pi)-h_k'(0)=0}
  =0,$$
    since the constant term of the series of $h_k$ vanishes. The mean value of $\rho$ over the interval $[0,2\pi]$ is zero. Hence, $\rho$ has at least one zero, which is equivalent to the fact that $\mathcal{P}_k$ is singular.

    Now assume that $\mathcal P_k$ has finitely many singular points. Equivalently, $\rho$ has only finitely many zeros in $[0,2\pi)$. Since $\rho$ is $\frac{2\pi}{k}$-periodic, whenever $s_0$ is zero, so are 
    $$
    s_0+\tfrac{2\pi}{k},\ s_0+\tfrac{2\cdot 2\pi}{k},\ \ldots,\
    s_0+\tfrac{(k-1)\,2\pi}{k}\pmod{2\pi}.
    $$
    These $k$ values are distinct modulo $2\pi$, so zeros occur in $k$-tuples. Hence, the total number of zeros of $\rho$ in $[0,2\pi)$ -- and therefore the number of singular points of $\mathcal P_k$ -- is divisible by $k$, which completes the proof.
    \end{proof}

    \begin{thm}\label{Thmcurvature}
Let $\mathcal{H}$ be a hedgehog with an average width equal to $\overline{w}$. Let $\rho_{\mathcal{H}}(p)$ denote the radius of curvature of $\mathcal{H}$ at $p$. Let $p_1,\ldots, p_k$ be an isogonal family of points in $\mathcal{H}$ and let $q=\frac{1}{k}\sum_{j=1}^{k}\left(\cos\left(\frac{2\pi j}{k}\right)\cdot p_j-\sin\left(\frac{2\pi j}{k}\right)\cdot p_j^\perp \right) -\frac{1}{2}\overline{w}\cdot \mathbbm{n}(p_1)$ be a non-singular point of $\mathcal{P}_k$. Then the signed radius of curvature of $\mathcal{P}_k$ at $q$ is equal
\begin{align}\label{KappaCmws}
\rho_{\mathcal{P}_k}(q) = \frac{1}{k}\sum_{j=1}^{k} \rho_{\mathcal{H}}\left(p_j\right) - \frac{1}{2}\overline{w},
\end{align}
whereas $\rho_{\mathcal{H}}(p_1),\ldots \rho_{\mathcal{H}}(p_k)$ denote the signed radii of curvature of $\mathcal{H}$ at $p_1,\ldots, p_k$, respectively.

\end{thm}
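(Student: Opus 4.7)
The plan is to reduce the statement to the classical formula
$\rho(s)=f(s)+f''(s)$ for the signed radius of curvature of a hedgehog with support function $f$,
applied to the support function $h_k$ of $\mathcal{P}_k$ provided by
Proposition~\ref{Preservingsuppth}. Since $\mathcal{P}_k$ is itself a hedgehog (by that same proposition),
and its standard parameterization is \eqref{eq:pkparameterization}, the radius of curvature at the non-singular point
$q=\mathcal{P}_k(s)$ is simply $h_k(s)+h_k''(s)$. This is precisely the scalar factor that already appeared in
the proofs of Propositions~\ref{PropTangentLins} and~\ref{singularitiesofT}, so I would just cite those
identities to avoid repeating the derivative computation.

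Next, I would plug in the explicit averaged formula
$$h_k(s)=\frac{1}{k}\sum_{j=0}^{k-1}h\!\left(s+\frac{2\pi j}{k}\right)-\frac{1}{2}\overline{w}$$
from Proposition~\ref{Preservingsuppth}. Differentiating twice termwise (permissible since $h$ is $C^\infty$)
and using the fact that the constant term $\tfrac12\overline{w}$ vanishes under differentiation, I obtain
$$h_k(s)+h_k''(s)=\frac{1}{k}\sum_{j=0}^{k-1}\Bigl[h\!\left(s+\tfrac{2\pi j}{k}\right)+h''\!\left(s+\tfrac{2\pi j}{k}\right)\Bigr]-\frac{1}{2}\overline{w}.$$
Setting $p_{j+1}=\mathcal{H}\!\left(s+\tfrac{2\pi j}{k}\right)$, which by Definition~\ref{k-tuple pair} is exactly how the isogonal family associated with the parameter $s$ is generated, and recognizing that the radius of curvature of $\mathcal{H}$ at the point $\mathcal{H}(t)$ equals $h(t)+h''(t)$, the sum reindexes to $\sum_{j=1}^{k}\rho_{\mathcal{H}}(p_j)$ using the cyclic convention $p_{k+1}=p_1$.

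The only delicate point is to confirm that the correspondence between the parameter $s$ of $\mathcal{P}_k$ and the isogonal family $\{p_j\}$ used in the statement is consistent with the averaging in Proposition~\ref{Preservingsuppth}; this is immediate once one notes that the direction $u(s)$ serves simultaneously as the outward unit normal to $\mathcal{H}$ at $p_1=\mathcal{H}(s)$ and to $\mathcal{P}_k$ at $q=\mathcal{P}_k(s)$ (this is also the content of Proposition~\ref{PropTangentLins}). Combining the two displayed equalities immediately yields \eqref{KappaCmws}. There is no genuine obstacle here: the result is an essentially algebraic consequence of the averaging identity for $h_k$ together with the curvature formula $f+f''$, and the main thing to keep track of is the indexing and the cancellation of the $\overline{w}$ term under the second derivative.
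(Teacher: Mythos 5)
Your proposal is correct and follows essentially the same route as the paper's proof: both apply the formula $\rho = h_k + h_k''$ to the averaged support function from Proposition~\ref{Preservingsuppth}, observe that the constant term $\tfrac{1}{2}\overline{w}$ survives only once (since it vanishes under differentiation), and reindex the resulting sum as $\tfrac{1}{k}\sum_j \rho_{\mathcal{H}}(p_j)-\tfrac{1}{2}\overline{w}$. Your extra remark on the consistency of the parameter $s$ between $\mathcal{H}$ and $\mathcal{P}_k$ is a sensible check that the paper leaves implicit.
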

\begin{proof}
Consider the parameterization of $\mathcal{H}$, then $p_j = \mathcal{H}\left(s+\frac{2\pi j}{k}\right)$. The signed radius of curvature of a hedgehog with a support function $h$ is known to be given by the following formula: $\rho(s)=h(s)+h''(s)$.  Therefore, one must notice that:
\begin{align*}
    \rho_{\mathcal{P}_k}(s) &= h_k(s) + h_k''(s)=\frac{1}{k}\sum_{j=1}^{k}h\Big(s+\frac{2\pi j}{k}\Big) - \frac{1}{2}\overline{w} +\frac{1}{k}\sum_{j=1}^{k}h''\Big(s+\frac{2\pi j}{k}\Big)\\&=\frac{1}{k}\sum_{j=1}^{k}\Bigg(h\Big(s+\frac{2\pi j}{k}\Big)+ h''\Big(s+\frac{2\pi j}{k}\Big)\Bigg) - \frac{1}{2}\overline{w}=\frac{1}{k}\sum_{j=1}^{k}\rho_{\mathcal{H}}\Big(s+\frac{2\pi j}{k}\Big)- \frac{1}{2}\overline{w},
\end{align*}
which concludes the proof.
\end{proof}

Leveraging the property that a hedgehog $\mathcal{H}$ is singular at points where its radius of curvature vanishes, and noting that a singularity at $\mathcal{H}(s)$, for $s \in [0, 2\pi]$, is a~cusp if and only if  $\det\left(\mathcal{H}''(s), \mathcal{H}'''(s)\right)\neq 0$, we can derive through direct calculation the following proposition:
\begin{prop}\label{SingCor}
Let $\mathcal{H}$ be a hedgehog of an average width equal to $\overline{w}$. Let $ \mathbbm{n}$ be a continuous unit normal vector field to $\mathcal{H}$.  Let $p_1,\ldots, p_k$ be an isogonal family of points in $\mathcal{H}$. Let $\rho_{\mathcal{H}}(p_1),\ldots \rho_{\mathcal{H}}(p_k)$ denote the signed radii of curvature of $\mathcal{H}$ at $p_1,\ldots, p_k$, respectively. The $k$th Order Preserving Set is singular at the point $\frac{1}{k}\sum_{j=1}^{k}\left(\cos\left(\frac{2\pi j}{k}\right)\cdot p_j-\sin\left(\frac{2\pi j}{k}\right)\cdot p_j^\perp  \right) -\frac{1}{2}\overline{w}\cdot \mathbbm{n}(p_1)$ if and only if the mean signed radius of curvature at the points $p_1,\ldots, p_k$ is equal to half the average width $\overline{w}$, i.e.,
\begin{align}\label{SingCondition}
\frac{1}{k}\sum_{j=1}^{k} \rho_{\mathcal{H}}\left(p_j\right) = \frac{1}{2}\overline{w}.
\end{align}
Furthermore, considering the differentiated parameterizations of the $k$th Order Preserving Set, a singular point $\mathcal{P}_k(s) $ is a cusp if and only if $\sum_{j=1}^{k}\rho'_{\mathcal{H}}\left(s+\frac{2\pi j}{k}\right) \neq 0$.
\end{prop}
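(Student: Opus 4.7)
The plan is to work directly from the parameterization \eqref{eq:pkparameterization} of $\mathcal{P}_k$, reducing both the singularity criterion and the cusp criterion to scalar conditions on $\rho_{\mathcal{P}_k}(s) = h_k(s)+h_k''(s)$, and then translating these back to conditions on $\mathcal{H}$ via Theorem \ref{Thmcurvature}. Throughout, I abbreviate $T(s)=(-\sin(s),\cos(s))$ and note that $T'(s)=(-\cos(s),-\sin(s))$, so that $T$ and $T'$ form an orthonormal frame with $\det(T,T')=1$.

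For the first part, differentiating \eqref{eq:pkparameterization} just as in the proof of Proposition \ref{PropTangentLins} gives $\mathcal{P}_k'(s) = \rho_{\mathcal{P}_k}(s)\,T(s)$. Hence $\mathcal{P}_k(s)$ is singular precisely when $\rho_{\mathcal{P}_k}(s) = 0$. Applying Theorem \ref{Thmcurvature} with $p_j = \mathcal{H}(s+\tfrac{2\pi j}{k})$, this is exactly the condition $\frac{1}{k}\sum_{j=1}^{k}\rho_{\mathcal{H}}(p_j) = \frac{1}{2}\overline{w}$, which is \eqref{SingCondition}.

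For the cusp criterion, I would use the characterization recalled in Section \ref{sec2} (and in Theorem B.9.1 of \cite{UYBook}): at a singular point $s$, the map has an ordinary cusp iff $\det\bigl(\mathcal{P}_k''(s),\mathcal{P}_k'''(s)\bigr)\neq 0$. Differentiating $\mathcal{P}_k' = \rho_{\mathcal{P}_k}\,T$ and using $\rho_{\mathcal{P}_k}(s)=0$ at the singular point, I obtain
\begin{align*}
\mathcal{P}_k''(s) &= \rho'_{\mathcal{P}_k}(s)\,T(s),\\
\mathcal{P}_k'''(s) &= \rho''_{\mathcal{P}_k}(s)\,T(s) + 2\rho'_{\mathcal{P}_k}(s)\,T'(s).
\end{align*}
Since the $T(s)$-components of both vectors are collinear, the determinant reduces to
\[
\det\bigl(\mathcal{P}_k''(s),\mathcal{P}_k'''(s)\bigr) = 2\bigl(\rho'_{\mathcal{P}_k}(s)\bigr)^2\det\bigl(T(s),T'(s)\bigr) = 2\bigl(\rho'_{\mathcal{P}_k}(s)\bigr)^2.
\]
Therefore the singularity is a cusp iff $\rho'_{\mathcal{P}_k}(s)\neq 0$. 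Differentiating the identity from Theorem \ref{Thmcurvature} term by term, $\rho'_{\mathcal{P}_k}(s) = \frac{1}{k}\sum_{j=1}^{k}\rho'_{\mathcal{H}}(s+\tfrac{2\pi j}{k})$, which gives the stated cusp condition.

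The proof is essentially a direct computation once Theorem \ref{Thmcurvature} is in hand; the only mildly delicate point is the cusp step, where one must be careful to use $\rho_{\mathcal{P}_k}(s)=0$ before taking the determinant so that the $T(s)$-contributions cancel out and a clean expression in terms of $\rho'_{\mathcal{P}_k}(s)$ emerges.
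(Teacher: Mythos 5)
Your proposal is correct and follows exactly the route the paper itself indicates: the paper states this proposition without a written-out proof, merely noting that it follows "through direct calculation" from the vanishing of the radius of curvature at singular points and the cusp criterion $\det(f''(s),f'''(s))\neq 0$, and your computation (reducing to $\rho_{\mathcal{P}_k}(s)=0$ via Theorem \ref{Thmcurvature}, then evaluating $\det(\mathcal{P}_k''(s),\mathcal{P}_k'''(s))=2\big(\rho'_{\mathcal{P}_k}(s)\big)^2$ at a singular point) is precisely that calculation carried out correctly.
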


Geometrically speaking, at an isogonal $k$-tuple of regular points $p_1,\ldots,p_k$ on $\mathcal H$, let us draw the
osculating circles strictly tangent to $\mathcal H$ and write
$r_j:=\rho_{\mathcal H}(p_j)$ for their signed radii. Set
$\overline{r}:=\frac{1}{k}\sum_{j=1}^k r_j$.
Then, the corresponding point in $\mathcal P_k$ is singular if and only if
$\overline{r}=\frac{\overline w}{2}$. Equivalently, $\mathcal P_k$ has a singularity precisely when the
isogonally averaged osculating radius equals half the average width of
the hedgehog, revealing an intriguing link between local curvature data
and the global quantity $\overline w$.


\section{The $k$th Order Midpoint Set} \label{sec4}

\noindent Consider a hedgehog $\mathcal{H}$, its Minkowski support function $h$ and some integer $k>2$. Consider the Fourier expansion of $h$, that is, 
\begin{align*}
h (s) = a_0 + \sum_{ n=1}^{\infty}a_n\cos(ns)+b_n\sin(ns).
\end{align*}
The Steiner point of $\mathcal{H}$ is $(a_1, b_1)$, see \eqref{eq:SteinerPoint}. Since the coefficients $a_1\cos(s),\ b_1\sin(s)$ account only for a translation of the hedgehog, we translate the whole system so that the Steiner point of $\mathcal{H}$ is in $(0,0)$.
Consider any $s\in[0,2\pi]$. The support function generates an equiangular $k$-gon with the distances from the origin to its sides given by $h(s),h\left(s+\frac{2\pi}{k}\right),\ldots , h\left(s+\frac{2\pi(k-1)}{k}\right)$. These sides are contained in support lines to $\mathcal{H}$ at $\mathcal{H}(s),\ldots,\mathcal{H}\left(s+\frac{2\pi(k-1)} {k}\right)$, respectively. A straightforward geometric observation reveals that our support function is $\frac{2\pi}{k}$-periodic if and only if every equiangular $k$-gon circumscribed about $\mathcal{H}$ is a regular $k$-gon with its center of mass in $(0,0)$. By Proposition ~\ref{kperiodic} we have that $\frac{2\pi}{k}$-periodicity is equivalent to the fact that the Fourier series of the support function retains only indices divisible by $k$ (including the index $0$ \textit{sic!}). Returning with our translation, we obtain the following result:
\begin{prop}\label{regularequiv}
    Every equiangular $k$-gon circumscribed about $\mathcal{H}$ is a regular \linebreak $k$-gon with its center of mass in the Steiner point of $\mathcal{H}$ if and only if the Fourier expansion of the support function of $\mathcal{H}$ retains only indices divisible by $k$ and its first harmonics, i.e.,
    $$h(s) = a_0 +a_1\cos(s)+b_1\sin(s)+ \sum_{k\mid n, \;n>1}a_n\cos(ns)+b_n\sin(ns).$$
\end{prop}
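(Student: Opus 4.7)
The plan is to formalize the geometric argument sketched in the paragraph preceding the proposition, reducing the problem to the $\frac{2\pi}{k}$-periodicity of a translated support function and then invoking Proposition~\ref{kperiodic}.

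First, I would translate the plane so that the Steiner point $S(\mathcal{H})=(a_1,b_1)$ moves to the origin. Since the first harmonics $a_1\cos(s)+b_1\sin(s)$ of the support function correspond precisely to a translation of $\mathcal{H}$ by $(a_1,b_1)$, the translated hedgehog $\widetilde{\mathcal{H}}:=\mathcal{H}-S(\mathcal{H})$ has support function
\[
\widetilde{h}(s) = h(s)-a_1\cos(s)-b_1\sin(s) = a_0 + \sum_{n\geqslant 2}\bigl(a_n\cos(ns)+b_n\sin(ns)\bigr),
\]
and its Steiner point lies at the origin. Since circumscribed $k$-gons are preserved under rigid translations, it suffices to prove the claim with $\mathcal{H}$ replaced by $\widetilde{\mathcal{H}}$ and ``Steiner point'' replaced by the origin.

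Next, I would establish the geometric equivalence: every equiangular $k$-gon circumscribed about $\widetilde{\mathcal{H}}$ is a regular $k$-gon centered at the origin if and only if $\widetilde{h}$ is $\frac{2\pi}{k}$-periodic. For fixed $s\in[0,2\pi]$ the circumscribed equiangular $k$-gon is bounded by the support lines $\langle x,u(s+\tfrac{2\pi j}{k})\rangle=\widetilde{h}(s+\tfrac{2\pi j}{k})$ for $j=0,\ldots,k-1$, whose outward normals are evenly spaced by $\frac{2\pi}{k}$. If all $k$ signed distances $\widetilde{h}(s+\tfrac{2\pi j}{k})$ coincide, the sides are tangent to a common circle centered at the origin at equally spaced angles; the $k$ vertices, obtained as intersections of consecutive sides, are then related to one another by rotations of angle $\frac{2\pi}{k}$ about the origin, so the polygon is regular with centroid at the origin. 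Conversely, a regular $k$-gon centered at the origin has its incenter at the origin, so the signed distances to its sides are equal. Requiring this for every $s$ gives exactly $\widetilde{h}(s)=\widetilde{h}\!\left(s+\tfrac{2\pi}{k}\right)$ for all $s$.

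Finally, I would apply Proposition~\ref{kperiodic} to conclude that $\widetilde{h}$ is $\frac{2\pi}{k}$-periodic if and only if its Fourier series has the form $a_0+\sum_{k\mid n,\,n>1}\bigl(a_n\cos(ns)+b_n\sin(ns)\bigr)$. Undoing the translation by adding $a_1\cos(s)+b_1\sin(s)$ then produces the stated form of $h$. The only genuinely non-formal step is the geometric equivalence in the previous paragraph; I expect this to be the main point to write out carefully, in particular the implication from equidistance of sides to regularity and centrality of the polygon, which I would handle via the explicit symmetry of the vertex configuration under rotation by $\frac{2\pi}{k}$.
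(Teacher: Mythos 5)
Your proposal is correct and follows essentially the same route as the paper: translate the Steiner point to the origin, reduce the statement to the $\frac{2\pi}{k}$-periodicity of the support function, and invoke Proposition~\ref{kperiodic}. The only difference is in the converse geometric step, where you deduce regularity from the invariance of the configuration of support lines under rotation by $\frac{2\pi}{k}$ about the origin (so that $v_{i+1}=\rot\left(v_i,\frac{2\pi}{k}\right)$), whereas the paper reaches the same conclusion by an explicit congruence argument with the triangles $t_i$ and $T_i$; your symmetry argument is a clean and valid substitute.
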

\begin{proof}
Consider a hedgehog $\mathcal{H}$ in the Cartesian plane, positioned such that its Steiner point coincides with the origin (without loss of generality).  
Assume that every equiangular $k$-gon circumscribed about $\mathcal{H}$ is a regular $k$-gon with its center of mass at the origin.

Let us consider any regular $k$-gon among those circumscribed about $\mathcal{H}$. Construct $k$ line segments from the center of mass of this $k$-gon (i.e., the origin) perpendicular to each of its $k$ sides. Owing to the regularity of the $k$-gon, these $k$ lines are of equal length.  
This common length is the distance from the origin to the support lines of $\mathcal{H}$, hence it equals the value of the support function of $\mathcal{H}$ at $k$ evenly spaced angles (these angles are $\frac{2\pi}{k}$ each). Since the choice of the regular $k$-gon is arbitrary, we have obtained that $h$, the support function of $\mathcal{H}$, is $\frac{2\pi}{k}$-periodic.

By invoking Proposition~\ref{kperiodic}, the support function of $\mathcal{H}$ takes the desired form
\[
h(s) = a_0 + \sum_{k\mid n, \;n>1} a_n \cos(ns) + b_n \sin(ns).
\]

Conversely, without loss of generality, assume the support function of $\mathcal{H}$ is
\[
h(s) = a_0 + \sum_{k\mid n, \;n>1} a_n \cos(ns) + b_n \sin(ns).
\]
By Proposition~\ref{kperiodic}, $h$ is $\frac{2\pi}{k}$-periodic.  
This implies that, for any equiangular $k$-gon circumscribed about $\mathcal{H}$, the distances from the origin to its $k$ sides are equal.

Denote the perpendicular line segments from the origin to the sides of the \linebreak $k$-gon by $\ell_1,\dots,\ell_k$ and let $w_i$ be the intersection point of $\ell_i$ with the side it meets.  
The~lines $\ell_i$ and $\ell_{i+1}$ (indices modulo $k$) form an isosceles triangle $t_i$ with vertex at the origin.  
Since the angle between consecutive $\ell_i$ and $\ell_{i+1}$ is constantly $\frac{2\pi}{k}$, the triangles $t_1,\dots,t_k$ are all congruent. Let $v_i$ be the vertex of the $k$-gon lying between sides supported on $\ell_i$ and $\ell_{i+1}$ (with $v_k$ between $\ell_k$ and $\ell_1$).  
Consider the triangles $T_i$ formed by the points $v_i$, $w_i$, and $w_{i+1}$ (with $T_k$ formed by $v_k$, $w_k$, and $w_1$). For a visual reference, see Figure \ref{Cap11}, where these geometric objects ($v_i$, $w_i$, $\ell_i$, $t_i$, and $T_i$) are illustrated.

\begin{figure}[h]
\centering
\includegraphics[scale=0.25]{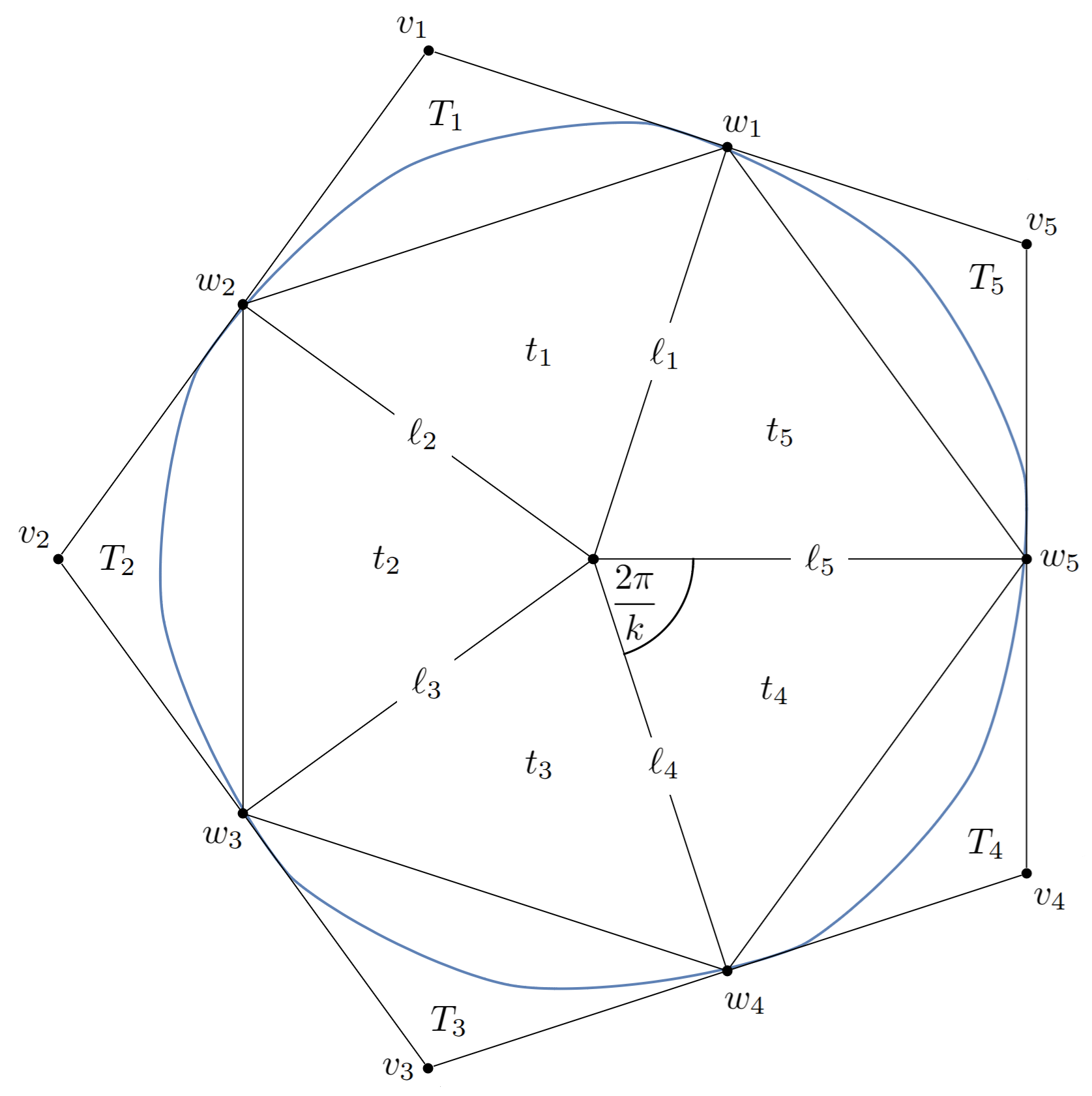}
\caption{A hedgehog $\mathcal{H}$ with a support function $h(s)=130+\sin (5s)+\sin (10s)$, a regular pentagon circumscribed about $\mathcal{H}$, and the Steiner point of $\mathcal{H}$ (centroid of the pentagon), with $v_i$, $w_i$, $\ell_i$, $t_i$, and $T_i$ indicated} 
\label{Cap11}
\end{figure} 

Each triangle $T_i$ has:
an angle $\pi - \frac{2\pi}{k}$ at $v_i$,
equal angles $\frac{\pi}{k}$ at $w_i$ and $w_{i+1}$,
a~base $w_i w_{i+1}$ of constant length (the base of the congruent triangle $t_i$).

Thus, the triangles $T_1,\dots,T_k$ are congruent.  
In particular, all sides of the $k$-gon are equal.  
Moreover, each $T_i$ is isosceles with apex $v_i$, so $w_{i+1}$ lies midway between $v_i$ and $v_{i+1}$ on the base.  
By congruence and the isosceles symmetry of the $T_i$, the~distances from the origin to the vertices $v_i$ are all equal.  

Therefore, the equiangular $k$-gon has equal sides and all vertices at the same distance from the origin. Hence, it is a regular $k$-gon with its center of mass at the~origin.
\end{proof}

The statement of Proposition \ref{regularequiv} is illustrated in Figure~\ref{Cap11}.

\begin{defn}
    Let $\mathcal{H}$ be a hedgehog and let $k>2$. Consider the family of isogonal (equiangular) $k$-gons circumscribed about $\mathcal{H}$. We call the family of the centers of mass of such $k$-gons the \textit{$k$th Order Midpoint Set} $\Omega_{\mathcal{H},k}$.
\end{defn} 
\begin{rem}
    In analogy with Remark~\ref{remCWMS}, the case $k = 2$ corresponds to “isogonal pairs” that can be interpreted as parallel pairs of points on the curve $\mathcal{H}$. The set of midpoints of all such parallel pairs is known in the literature by many names -- as the Wigner caustic, the middle hedgehog, the area evolute, and the defect of symmetry -- and has many applications, including isoperimetric-type problems (see~\cite{Berry, CraizerAreaEvolute, DFJSymmetryDefect, DMR1, DR1, DRZsecant, DZnew, DZ1, DZgb, GWZ1, JJR1, S5, S6, Z2}, and the references therein). It is worth noting that, although the Wigner caustic is typically a highly singular set, the $k$th Order Midpoint Set exhibits a different behavior: in most cases, it is smooth and free of singularities. Moreover, recent work has also explored the artistic aspects of the Wigner caustic and related sets, emphasizing their aesthetic appeal alongside their geometric properties (see~\cite{DPSZ-art}).
\end{rem}

\begin{ex}
    Consider a hedgehog $\mathcal{H}$ with a support function $h(s) = 10 + \sin(2s) + \cos(3s)+ \cos(4s) + \cos(5s) $ and its $3$rd Order Midpoint Set (see Figure~\ref{fig: fig6}).

\begin{figure}[h]
\centering
\includegraphics[scale=0.26]{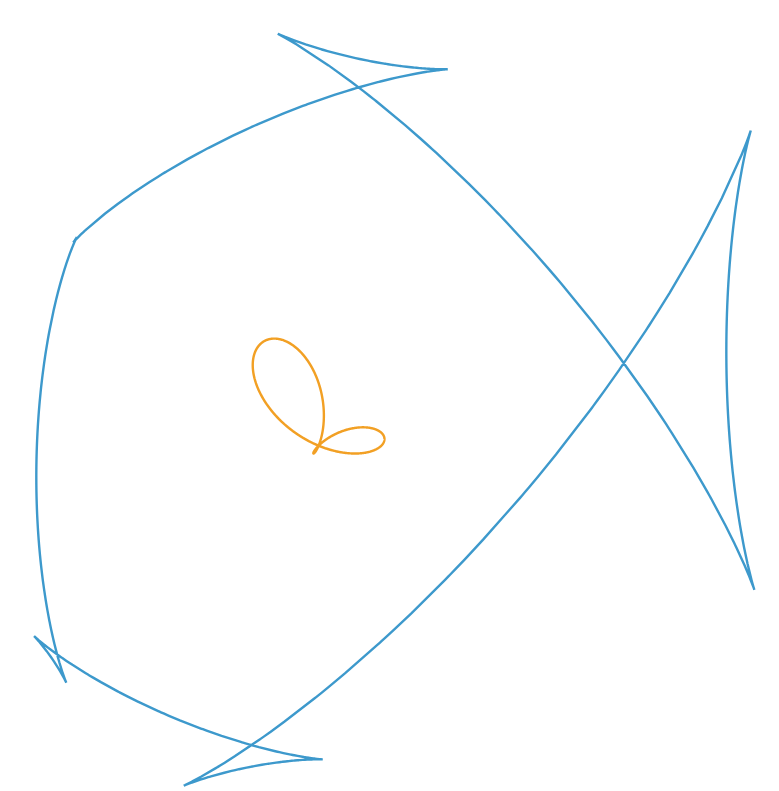}
\includegraphics[scale=0.26]{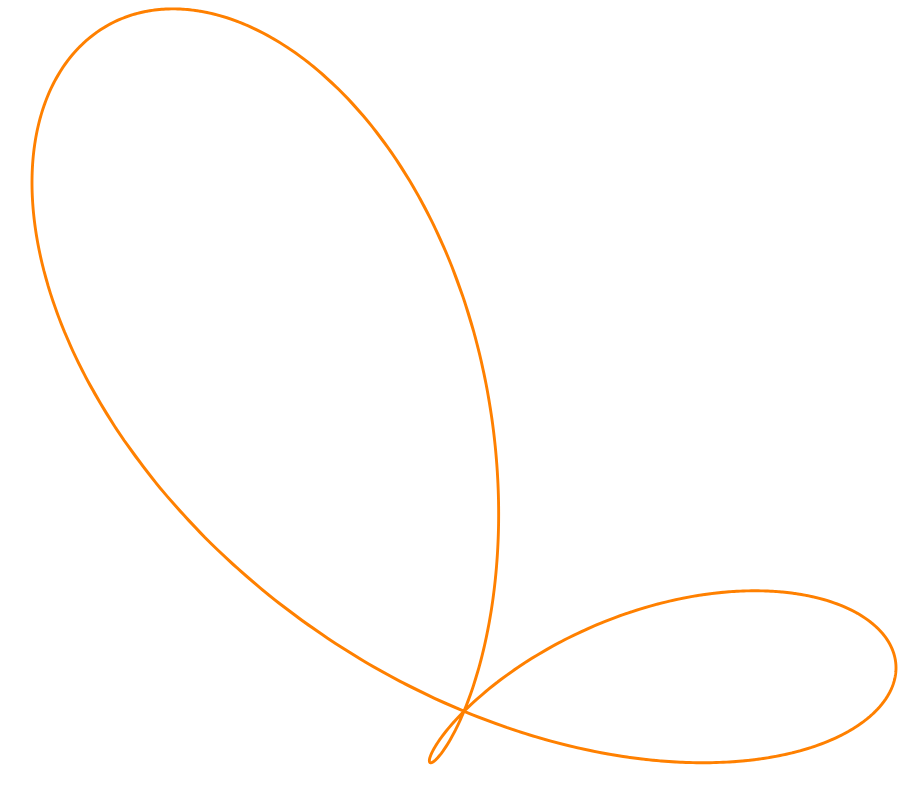}
\caption{On the left, the hedgehog $\mathcal{H}$ together with $\Omega_{\mathcal{H},3}$; on the right, $\Omega_{\mathcal{H},3}$ alone (magnified)}
\label{fig: fig6}
\end{figure} 

\end{ex}
Building on the geometric construction of equiangular $k$-gons described earlier, a straightforward analytical proof demonstrates that the $k$th Order Midpoint Set can be elegantly parameterized: 
\begin{prop}\label{prop:midpar}
Consider a hedgehog $\mathcal{H}$ and a natural number $k>2$. Then the $k$th Order Midpoint Set has the following parameterization:
\begin{equation}
\label{eq:midpar}
    \Omega_{\mathcal{H},k}(s) = \frac{2}{k}\sum_{j=1} ^k h\Big(s+\frac{2\pi j}{k}\Big)\Bigg(\cos\Big(s+\frac{2\pi j}{k}\Big),\sin\Big(s+\frac{2\pi j}{k}\Big)\Bigg)\quad\text{for}\quad s\in\left[0,\frac{2\pi}{k}\right].
\end{equation}
    
\end{prop}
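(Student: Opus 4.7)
The plan is to identify the equiangular $k$-gon circumscribed about $\mathcal{H}$ associated with the parameter $s$, compute its vertices explicitly as intersections of consecutive support lines, and then average them. Writing $\alpha=\frac{2\pi}{k}$ and $\theta_j=s+j\alpha$, the $j$-th side of the polygon lies on the support line $\langle x,u(\theta_j)\rangle=h(\theta_j)$, so the vertex $v_j$ between the sides indexed by $j$ and $j+1$ is the unique solution of the $2\times 2$ linear system
\begin{align*}
\langle v_j,u(\theta_j)\rangle=h(\theta_j),\qquad \langle v_j,u(\theta_{j+1})\rangle=h(\theta_{j+1}).
\end{align*}

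The cleanest way to carry out the algebra is to identify $\mathbb{R}^2$ with $\mathbb{C}$. Parametrizing the support line at $\theta_j$ by $z=(h(\theta_j)+it)e^{i\theta_j}$, $t\in\mathbb{R}$, and imposing the second condition yields $t_j=\frac{h(\theta_{j+1})-h(\theta_j)\cos\alpha}{\sin\alpha}$. Using the identity $\sin\alpha-i\cos\alpha=-ie^{i\alpha}$, one then obtains the compact form
\begin{align*}
v_j=\frac{i}{\sin\alpha}\Bigl(h(\theta_{j+1})\,e^{i\theta_j}-h(\theta_j)\,e^{i\theta_{j+1}}\Bigr).
\end{align*}

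The key step is then a cyclic re-indexing exploiting the $2\pi$-periodicity of $h$. Both sums $\sum_j h(\theta_{j+1})e^{i\theta_j}$ and $\sum_j h(\theta_j)e^{i\theta_{j+1}}$ collapse to scalar multiples of $\sum_j h(\theta_j)e^{i\theta_j}$, namely $e^{-i\alpha}\sum_j h(\theta_j)e^{i\theta_j}$ and $e^{i\alpha}\sum_j h(\theta_j)e^{i\theta_j}$, respectively. Since $e^{-i\alpha}-e^{i\alpha}=-2i\sin\alpha$, the trigonometric prefactors cancel and the centroid simplifies to
\begin{align*}
\frac{1}{k}\sum_{j=0}^{k-1}v_j=\frac{2}{k}\sum_{j=0}^{k-1}h(\theta_j)\bigl(\cos\theta_j,\sin\theta_j\bigr),
\end{align*}
which, after a harmless shift of the summation index from $j=0,\ldots,k-1$ to $j=1,\ldots,k$ (valid by $2\pi$-periodicity), is precisely \eqref{eq:midpar}. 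The parameter range $s\in[0,2\pi/k]$ is justified by the observation that replacing $s$ by $s+\alpha$ merely cyclically relabels the vertices of the very same circumscribed polygon, so $\Omega_{\mathcal{H},k}$ is $\alpha$-periodic in $s$ and any interval of length $\alpha$ already traces the whole set.

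The only mildly delicate point is the algebraic passage from the two-line intersection to the compact complex form for $v_j$; everything afterwards is bookkeeping with $2\pi$-periodicity and a clean trigonometric cancellation.
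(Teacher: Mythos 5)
Your proof is correct, and I verified the key identities: the complex form $v_j=\frac{i}{\sin\alpha}\bigl(h(\theta_{j+1})e^{i\theta_j}-h(\theta_j)e^{i\theta_{j+1}}\bigr)$ agrees with the vertex the paper computes (the paper writes it as $h_{j+1}(cs_{j+1},s_{j+1})+\frac{h_j-h_{j+1}\cos\alpha}{\sin\alpha}(s_{j+1},-cs_{j+1})$, which is the same point), and the cyclic re-indexing together with $e^{-i\alpha}-e^{i\alpha}=-2i\sin\alpha$ gives exactly the factor $2$ in \eqref{eq:midpar}. The geometric setup is the same as the paper's -- both of you interpret the center of mass as the centroid of the vertices, obtain each vertex as the intersection of two consecutive support lines, and then average -- but the execution of the averaging step is genuinely different. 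The paper expands $h$ in its complex Fourier series, multiplies out the products $h_{j}\cdot cs_{j+1}$ etc., and collapses the resulting double sums via the orthogonality relation $\sum_{j=0}^{k-1}e^{2\pi i jn/k}\in\{0,k\}$; your telescoping re-indexing exploits only the $2\pi$-periodicity of $h$ and avoids Fourier analysis entirely, which is shorter and makes the cancellation of the $\frac{1}{\sin\alpha}$ prefactor transparent. The trade-off is that the paper's Fourier computation simultaneously produces the harmonic description of $\Omega_{\mathcal{H},k}$ (the coefficients supported on indices $n\equiv\pm1\pmod k$) that is reused immediately afterwards in Proposition \ref{singmid} and Lemma \ref{lemOrientedAreaMiddlePoint}, whereas your argument would require a separate (easy) Fourier expansion of \eqref{eq:midpar} to recover that information. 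Your justification of the restricted parameter range $s\in[0,2\pi/k]$ by $\alpha$-periodicity of the centroid map matches the paper's $k$-fold covering remark. One small point worth making explicit: $\sin\alpha\neq 0$ since $k>2$ forces $0<\alpha<\pi$, so the two-line intersection is always well defined.
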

\begin{proof}
Utilizing the nomenclature from the proof of Proposition \ref{regularequiv} (see also Figure~\ref{Cap11}), we introduce $\alpha$ -- the angle defined by $w_{j+1}$, the origin and $v_j$. Note that $\alpha \neq \frac{\pi}{2}$, by construction. One gets that $\cos(\alpha)=\frac{h\left(\frac{2\pi}{k}(j+1)+s\right)}{y_j}$, whereas $y_j$ is the distance from the origin to $v_j$. Similarly, $\cos\left(\frac{2\pi}{k}-\alpha\right)=\frac{h\left(\frac{2\pi}{k}j+s\right)}{y_j}$.  For simplicity sake, we denote $\sin\left(s+\frac{2\pi}{k}(j+1)\right)$ as $s_{j+1}$, $\cos\left(s+\frac{2\pi}{k}(j+1)\right)$ as $cs_{j+1}$ and likewise $h\left(s+\frac{2\pi}{k}(j+1)\right)$ as $h_{j+1}$. Using the cosine-difference identity, we obtain $\tan(\alpha) = \frac{\cos\left(\frac{2\pi}{k}-\alpha \right)-\cos\left(\alpha\right)\cdot\cos\left(\frac{2\pi}{k}\right)}{\cos\left(\alpha\right)\cdot\sin\left(\frac{2\pi}{k}\right)} = \frac{h_j - h_{j+1}\cos\left(\frac{2\pi}{k}\right)}{h_{j+1}\sin\left(\frac{2\pi}{k}\right)}$. By vector analysis, we see that the point $v_j$ is represented by the coordinates:
\begin{align*}
v_j &= h\left(s+\frac{2\pi}{k}(j+1)\right)\cdot\left(\cos\left(s+\frac{2\pi}{k}(j+1)\right),\sin\left(s+\frac{2\pi}{k}(j+1)\right)\right) \\&+ \tan(\alpha)\cdot h\left(s+\frac{2\pi}{k}(j+1)\right)\cdot\left(\cos\left(s+\frac{2\pi}{k}(j+1) - \frac{\pi}{2}\right),\sin\left(s+\frac{2\pi}{k}(j+1)- \frac{\pi}{2}\right)\right)\\&=
h_{j+1} \cdot(cs_{j+1},s_{j+1}) + \frac{h_j - h_{j+1}\cos\left(\frac{2\pi}{k}\right)}{\sin\left(\frac{2\pi}{k}\right)}\cdot\left(s_{j+1},-cs_{j+1}\right).
\end{align*}
We now arrive at a crucial computational point. Our goal is to find the centroid of the polygon defined by the vertices $v_1,\ldots,v_k$. To achieve this goal we recall the~Fourier series $h_j = \sum_{n=-\infty}^{\infty} c_n {\e}^{\i n\left(s+\frac{2\pi}{k}j\right)}$ and the fractions $\cos(s) = \frac{{\e}^{\i s}+{\e}^{-\i s}}{2}$ and $\sin(s) = \frac{{\e}^{\i s}-{\e}^{-\i s}}{2\i }$. Since all calculations are very much analogical, it suffices to provide a step-by-step evaluation of only the following sum:
\begin{align*}
    \sum_{j=0}^{k-1}h_{j+1}\cdot cs_{j+1} &= \sum_{j=0}^{k-1}\sum_{n=-\infty}^{\infty}c_n {\e}^{\i n\left(s+\frac{2\pi}{k}(j+1)\right)}\cdot \frac{{\e}^{\i \left(s+\frac{2\pi}{k}(j+1)\right)}+{\e}^{-\i \left(s+\frac{2\pi}{k}(j+1)\right)}}{2}\\ &=\sum_{n=-\infty}^{\infty}\frac{c_n}{2}{\e}^{\i ns}\Bigg(\sum_{j=0}^{k-1}{\e}^{\i s}\cdot {\e}^{\i (n+1)\left(\frac{2\pi}{k}(j+1)\right)}+\sum_{j=0}^{k-1}{\e}^{-\i s}\cdot {\e}^{\i (n-1)\left(\frac{2\pi}{k}(j+1)\right)}\Bigg).
\end{align*}
These operations are well-defined, since we are adding together finitely many convergent series. The internal expressions are finite geometric series, which can be conveniently reduced in the following fashion:
\begin{align*}
   \frac{1}{k}\sum_{j=0}^{k-1}h_{j+1}\cdot cs_{j+1} &=\sum_{ k \mid n+1}\frac{c_n}{2}{\e}^{\i (n+1)s} + \sum_{ k \mid n-1}\frac{c_n}{2}{\e}^{\i (n-1)s}.
\end{align*}
Similarly, one gets that:
$$ \frac{1}{k}\sum_{j=0}^{k-1}h_{j+1}\cdot s_{j+1} = \sum_{ k \mid n+1}\frac{c_n}{2i}{\e}^{\i (n+1)s} - \sum_{ k \mid n-1}\frac{c_n}{2\i }{\e}^{\i (n-1)s},$$
$$ \frac{1}{k} \sum_{j=0}^{k-1}h_j\cdot cs_{j+1} = \sum_{ k \mid n+1}\frac{c_n}{2}{\e}^{\i (n+1)s}{\e}^{\i \frac{2\pi}{k}} + \sum_{ k \mid n-1}\frac{c_n}{2}{\e}^{\i (n-1)s}{\e}^{-\i \frac{2\pi}{k}},$$
$$  \frac{1}{k}\sum_{j=0}^{k-1}h_j\cdot s_{j+1} = \sum_{ k \mid n+1}\frac{c_n}{2\i }{\e}^{\i (n+1)s}{\e}^{\i \frac{2\pi}{k}} - \sum_{ k \mid n-1}\frac{c_n}{2\i}{\e}^{\i (n-1)s}{\e}^{-\i \frac{2\pi}{k}}.$$ Combining these results, we find that:
\begin{align*}
    \frac{1}{k}\sum_{j=1}^{k}v_j &=\left(\sum_{ k \mid n+1}c_n{\e}^{\i (n+1)s} + \sum_{ k \mid n-1}c_n{\e}^{\i (n-1)s},\sum_{ k \mid n+1}\frac{c_n}{\i }{\e}^{\i (n+1)s} - \sum_{ k \mid n-1}\frac{c_n}{\i }{\e}^{\i (n-1)s}\right)\\
    &=\frac{2}{k}\Bigg(\sum_{j=0}^{k-1}h_{j+1}\cdot cs_{j+1},\sum_{j=0}^{k-1}h_{j+1}\cdot s_{j+1}\Bigg),
\end{align*} 
thus arriving at the proposed parameterization of the $k$th Order Midpoint Set.
Allowing the parameter $s$ to vary over the interval $[0, 2\pi]$ in the parameterization \eqref{eq:midpar} yields a $k$-fold covering of the $k$th Order Midpoint Set. Hence, it suffices to consider the restricted interval $s \in \left[0, \tfrac{2\pi}{k} \right]$ to describe the entire set without redundancy.
 \end{proof}   

From these considerations, one easily deduces the following proposition.
\begin{prop}\label{singmid}
    The $k$th Order Midpoint Set $\Omega_{\mathcal{H},k}$ of a given hedgehog $\mathcal{H}$ degenerates to a single point if and only if the support function of $\mathcal{H}$ has no Fourier harmonics with indices congruent to $\pm 1\pmod{k}$; equivalently, if and only if the~support function is equal to
    \begin{align}
    \label{eq:middlepointsupport}h(s) = \sum_{ k\nmid n+1\ \land \ k\nmid n-1} a_n\cos(ns) + b_n\sin(ns).
    \end{align}
\end{prop}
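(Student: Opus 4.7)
My plan is to extract the conclusion directly from the Fourier expansion of the parameterization of $\Omega_{\mathcal H,k}(s)$ already obtained inside the proof of Proposition~\ref{prop:midpar}. Both coordinates of that parameterization were shown there to be linear combinations of $c_n\,\e^{\i(n\pm 1)s}$ subject to $k\mid n\mp 1$, so that the only frequencies appearing in $s$ are integer multiples of $k$. Consequently, $\Omega_{\mathcal H,k}$ reduces to a single point if and only if every Fourier mode with nonzero frequency cancels, which is a purely algebraic condition on the coefficients $c_n$ of $h$.

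The key computational step I would carry out is the regrouping of the two sums by common frequency. Setting $N=mk$ with $m\in\mathbb Z$, the coefficient of $\e^{\i mks}$ in the first coordinate is $c_{mk-1}+c_{mk+1}$, while in the second coordinate it equals $\tfrac{1}{\i}\bigl(c_{mk-1}-c_{mk+1}\bigr)$. Requiring both to vanish for every $m\neq 0$ is a $2\times 2$ homogeneous system whose only solution is $c_{mk-1}=c_{mk+1}=0$. Hence constancy of $\Omega_{\mathcal H,k}(s)$ is equivalent to $c_n=0$ for every integer $n$ with $n\equiv\pm 1\pmod k$ lying outside $\{-1,+1\}$. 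Passing from the exponential basis back to $\{\cos(ns),\sin(ns)\}$ via $c_n=\tfrac12(a_n-\i b_n)$ and $c_{-n}=\overline{c_n}$ converts this into the vanishing of $a_n$ and $b_n$ for all positive $n$ with $k\mid n-1$ or $k\mid n+1$, which is precisely the condition in~\eqref{eq:middlepointsupport}. The converse implication is immediate: if $h$ has the stated form, then every non-constant frequency in the regrouped expansion is already absent, so $\Omega_{\mathcal H,k}(s)$ is constant.

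The main obstacle is conceptual rather than computational: it is the handling of the indices $n=\pm 1$, which produce the only constant term of the parameterization and correspond via~\eqref{eq:SteinerPoint} to the Steiner point. These harmonics never influence whether $\Omega_{\mathcal H,k}(s)$ is constant in $s$, only the value of the point to which it degenerates, so the translation performed in the preamble of Section~\ref{sec4} (placing the Steiner point at the origin and so forcing $a_1=b_1=0$) is exactly what is needed to match the clean formulation~\eqref{eq:middlepointsupport}. I would finish by making this translation explicit and stating that, once the Steiner point is at the origin, the characterization $c_n=0$ for all $n\equiv\pm 1\pmod k$ is equivalent to the displayed Fourier representation of $h$.
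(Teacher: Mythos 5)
Your proposal is correct and follows exactly the route the paper intends: the paper offers no separate proof of Proposition~\ref{singmid}, merely asserting that it follows ``from these considerations,'' namely the Fourier expansion of $\Omega_{\mathcal H,k}(s)$ derived in the proof of Proposition~\ref{prop:midpar}, and your regrouping by frequency $mk$ with the resulting $2\times 2$ system $c_{mk-1}\pm c_{mk+1}=0$ is precisely the intended deduction. Your explicit handling of the $n=\pm 1$ harmonics (the Steiner-point translation fixed in the preamble of Section~\ref{sec4}) is a welcome clarification of a point the paper leaves implicit in the formulation of~\eqref{eq:middlepointsupport}.
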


\begin{rem}
    A \textit{rotor} in a regular polygon is a convex closed curve that remains in continuous contact with the sides of the polygon while being fully rotated within it. In other words, it can be rotated through a full angle inside the polygon without losing contact with its boundary. 
    
    Meissner (\cite{Meissner}) demonstrated that such rotors in an $k$-sided regular polygon can be described analytically using their support functions:
    \begin{align*}
    h(s) =a_0+ \sum_{ k\mid n+1\ \vee \ k\mid n-1} a_n\cos(ns) + b_n\sin(ns).
    \end{align*}

    In other words, by Proposition~\ref{singmid}, the degeneration of the $k$th Order Midpoint Set to a single point is equivalent to $\mathcal{H}$ behaving, in a certain geometric sense, like the inverse harmonic counterpart of a rotor. A recent work (see \cite{Rochera3}) investigates a~dual problem to that of rotors: it characterizes curves along which a regular polygon can rotate while keeping all its vertices on the curve throughout the motion.
\end{rem}

Note that the support function of the hedgehog in Figure \ref{Cap11} is of the form \eqref{eq:middlepointsupport} -- therefore the corresponding $k$th Order Midpoint Set is a point. The opposite situation occurs with the hedgehog in Figure \ref{fig: fig6}.

To conclude this section, we establish a lemma giving an explicit expression for the oriented area of the $k$th Order Midpoint Set, which will serve as a crucial ingredient in the reasoning in the proof of the isoperimetric inequality in the next section.

\begin{lem}\label{lemOrientedAreaMiddlePoint}
    Let $k\in\mathbb{N}$, $k>2$, and let $\mathcal{H}$ be a hedgehog with support function  $$h(s)=a_0+\sum_{n=1}^{\infty}a_n\cos(ns)+b_n\sin(ns).$$ Then, the oriented area of the $k$th Order Midpoint Set $\Omega_{\mathcal{H},k}$ of $\mathcal{H}$ is given by the~following formula:
    \begin{align}
    \label{eq:MiddleOrientedArea}A_{\Omega_{\mathcal{H},k}} = \pi\sum_{n=1}^\infty n(a^2_{kn-1}+b^2_{kn-1}-a^2_{kn+1}-b^2_{kn+1}).
    \end{align}
\end{lem}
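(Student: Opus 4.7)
The plan is to compute the oriented area directly from the complex form of the parameterization~\eqref{eq:midpar}, combined with orthogonality of the Fourier basis. Writing the support function in complex exponential form $h(s)=\sum_{n\in\mathbb{Z}} c_n \e^{\i n s}$, with $c_n=\tfrac{a_n-\i b_n}{2}$ for $n\geqslant 1$, $c_0=a_0$, and $c_{-n}=\overline{c_n}$, I introduce the complex coordinate $z(s):=X(s)+\i Y(s)$ of $\Omega_{\mathcal{H},k}(s)$. The intermediate computations performed in the proof of Proposition~\ref{prop:midpar} already collapse both $X$ and $Y$ into single Fourier sums; combining them in $z$ yields the clean expression
\[ z(s) = 2\sum_{m\in\mathbb{Z}} c_{km-1}\,\e^{\i k m s}, \qquad \bar z(s) = 2\sum_{m\in\mathbb{Z}} c_{km+1}\,\e^{\i k m s}, \]
where the second identity follows from the first by the symmetry $c_{-\ell}=\overline{c_\ell}$.

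Next, I would invoke the standard complex formula $A=\tfrac12\int \mathrm{Im}\bigl(\bar z(s)\,z'(s)\bigr)\,\d s$ for the oriented area of a closed curve. Since the parameterization~\eqref{eq:midpar} is $\tfrac{2\pi}{k}$-periodic (as observed in Proposition~\ref{prop:midpar}, it covers $\Omega_{\mathcal{H},k}$ exactly $k$ times over $[0,2\pi]$), I would write
\[ A_{\Omega_{\mathcal{H},k}} \;=\; \frac{1}{2k}\int_0^{2\pi} \mathrm{Im}\bigl(\bar z(s)\,z'(s)\bigr)\,\d s. \]
Multiplying the two Fourier series for $\bar z$ and $z'=2\sum_{m} \i km\,c_{km-1}\,\e^{\i km s}$, and using the orthogonality relation $\int_0^{2\pi}\e^{\i k(p+m)s}\,\d s = 2\pi\,\delta_{p,-m}$, the double sum collapses to a single sum indexed by $m\in\mathbb{Z}$. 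The surviving product $c_{-km+1}\,c_{km-1}=|c_{km-1}|^2$ is real and nonnegative, so taking the imaginary part extracts the factor $\i km$ cleanly, giving
\[ A_{\Omega_{\mathcal{H},k}} \;=\; 4\pi\sum_{m\in\mathbb{Z}} m\,|c_{km-1}|^2. \]

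The remaining step is bookkeeping: the $m=0$ term vanishes; positive $m$ contributes $\tfrac{m}{4}(a_{km-1}^2+b_{km-1}^2)$ (since $km-1\geqslant 1$); and negative $m=-n$ with $n\geqslant 1$ contributes $-\tfrac{n}{4}(a_{kn+1}^2+b_{kn+1}^2)$ via $|c_{-kn-1}|^2=|c_{kn+1}|^2$. Summing, the difference structure $a_{kn-1}^2+b_{kn-1}^2-a_{kn+1}^2-b_{kn+1}^2$ emerges naturally from pairing the $m>0$ and $m<0$ contributions, yielding the claimed identity~\eqref{eq:MiddleOrientedArea}. The main delicate point I anticipate is precisely this last reindexing: negative Fourier indices must be folded into the positive-index coefficients using $c_{-\ell}=\overline{c_\ell}$, and one must simultaneously track the $k$-fold covering that produces the overall $\tfrac{1}{k}$ factor balancing against the $k$ in $km$ from differentiation. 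Once these symmetries are handled carefully, the computation is essentially a disguised Parseval identity restricted to the sublattice $k\mathbb{Z}\pm 1$.
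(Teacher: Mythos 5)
Your proposal is correct and follows essentially the same route as the paper: both compute the oriented area of the parameterization \eqref{eq:midpar} via Green's theorem, expand in exponential Fourier series, use orthogonality/geometric-series cancellation, and divide by $k$ to account for the $k$-fold covering. Your only deviation is organizational -- you package the coordinates into the complex quantity $z(s)=2\sum_{m}c_{km-1}\e^{\i kms}$ before integrating, which lets you skip the double sum over the vertex indices $m,j$ that the paper collapses with geometric series, and all your intermediate identities (including the factor $\tfrac{1}{2k}$ and the final reindexing via $c_{-\ell}=\overline{c_\ell}$) check out against the paper's computation.
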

\begin{proof}
    For simplicity sake, we denote $h\left(s+\frac{2\pi}{k}j\right)$ as $h_{j}$ and $h'\left(s+\frac{2\pi}{k}j\right)$ as $h'_{j}$, for $j=1,\ldots,k$.   Let us notice that according to Proposition \ref{prop:midpar} we parameterize the $k$th Order Midpoint Set on the interval $[0,\frac{2\pi}{k}]$. Considering the parameterization given by equation \eqref{eq:midpar} and finding the oriented area using Green's~theorem, one can get the following expression for the oriented area of $\Omega_{\mathcal{H},k}$ traced $k$ times:
    \begin{align}
    \label{eqOmegaAreaGreen}kA_{\Omega_{\mathcal{H},k}} = \frac{2}{k^2}\int \limits_{0}^{2\pi}\sum_{m,j=1}^k h_jh_m'\sin\left(\frac{2\pi}{k}(m-j)\right)+h_jh_m\cos\left(\frac{2\pi}{k}(m-j)\right) \; \d s.
    \end{align}
    Note that parameterizing by $s$ over $[0,2\pi]$ gives us a $k$-tuple covering of the $k$th Order Midpoint Set.  This $k$-fold covering
    explains the prefactor $k$ on the left-hand side of
    \eqref{eqOmegaAreaGreen}. From Parseval's identity and the Fourier series of $h$ and $h'$, it follows that (in terms of Fourier coefficients of the expansion of $h$ -- the Minkowski support function of $\mathcal{H}$):
    \begin{align*}
        kA_{\Omega_{\mathcal{H},k}} &= \frac{2\pi}{k^2}\Bigg(\sum_{n=-\infty}^\infty\sum_{m,j} -n|c_n|^2\cdot \left({\e}^{\i \frac{2\pi}{k}(m-j)}-{\e}^{-\i \frac{2\pi}{k}(m-j)}\right) {\e}^{\i n\frac{2\pi}{k}(j-m)} \\&+ \sum_{n=-\infty}^\infty\sum_{m,j} |c_n|^2\cdot \left({\e}^{\i \frac{2\pi}{k}(m-j)}+{\e}^{-\i \frac{2\pi}{k}(m-j)}\right) {\e}^{\i n\frac{2\pi}{k}(j-m)}\Bigg)\\&=\frac{2\pi}{k^2}\Bigg(\sum_{n=-\infty}^\infty\sum_{m,j} -n|c_n|^2 {\e}^{\i \frac{2\pi}{k}(1-n)m}{\e}^{\i \frac{2\pi}{k}(n-1)j} +n|c_n|^2{\e}^{-\i \frac{2\pi}{k}(n+1)m}{\e}^{\i \frac{2\pi}{k}(n+1)j} \\&+  \sum_{n=-\infty}^\infty\sum_{m,j}|c_n|^2{\e}^{\i \frac{2\pi}{k}(1-n)m}{\e}^{\i \frac{2\pi}{k}(n-1)j}+|c_n|^2{\e}^{-\i \frac{2\pi}{k}(n+1)m}{\e}^{\i \frac{2\pi}{k}(n+1)j}\Bigg).
    \end{align*}
    Recalling the previously used geometric series method in the proof of Proposition \ref{prop:midpar}, we get that
    \begin{align*}
         A_{\Omega_{\mathcal{H},k}} &=  \frac{2\pi}{k^3}k^2\Bigg( \sum_{k\mid n-1} -n|c_n|^2 + \sum_{k\mid n+1} n|c_n|^2 +\sum_{k\mid n-1} |c_n|^2 + \sum_{k\mid n+1} |c_n|^2 \Bigg) \\&= \frac{2\pi}{k}\Bigg(  \sum_{k\mid n+1} (n+1)|c_n|^2 -\sum_{k\mid n-1} (n-1)|c_n|^2\Bigg)\\&=
         \frac{2\pi}{k}\Bigg(  \sum_{n=-\infty}^\infty kn|c_{kn-1}|^2 -\sum_{n=-\infty}^\infty kn|c_{kn+1}|^2\Bigg)  \\&= \frac{2\pi}{k}\frac{k}{2}\Bigg(  \sum_{n=1}^\infty na_{kn-1}^2+nb_{kn-1}^2 -na_{kn+1}^2-nb_{kn+1}^2\Bigg).
    \end{align*}
    This completes the proof.
\end{proof}

\begin{ex}
By Lemma \ref{lemOrientedAreaMiddlePoint}, the $k$th Order Midpoint Set may have positive, negative, or even zero oriented area, as illustrated in Figure~\ref{fig:PosNegZeroOriented} in the cases of $4$th Order Midpoint Sets.
 The support functions of the ovals corresponding to the sets in Figure \ref{fig:PosNegZeroOriented} are as follows:
\begin{itemize}
    \item $h(s)=81+\sin(3s)-\sin(5s)+\sin(7s)$ for Figure \ref{fig:MiddlePositive},
    \item $h(s)=105+\sin (3s)-2\sin(5s)+\sin(7s)$ for Figure \ref{fig:MiddleNegative},
    \item $h(s)=108+\sqrt{2}\sin (3s)-2\sin (5s)+\sin (7s)$ for Figure \ref{fig:MiddleZero}.
\end{itemize}

It is also worth noting that these 4th Order Midpoint Sets are not necessarily hedgehogs.
    
\end{ex}

\begin{figure}[h]
    \centering
    \begin{subfigure}[h]{0.28\textwidth}
        \centering
        \includegraphics[width=\textwidth]{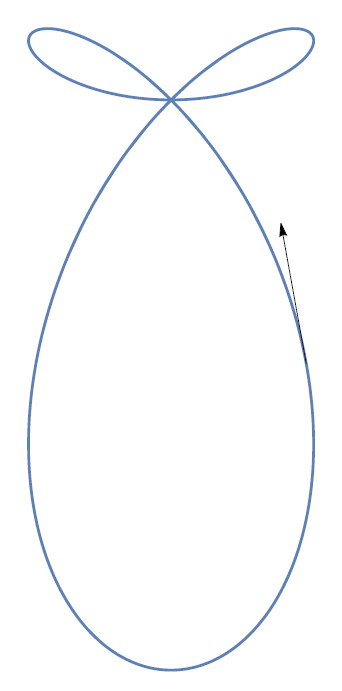}
        \caption{Positive}
        \label{fig:MiddlePositive}
    \end{subfigure}
    \hfill
    \begin{subfigure}[h]{0.36\textwidth}
        \centering
        \includegraphics[width=\textwidth]{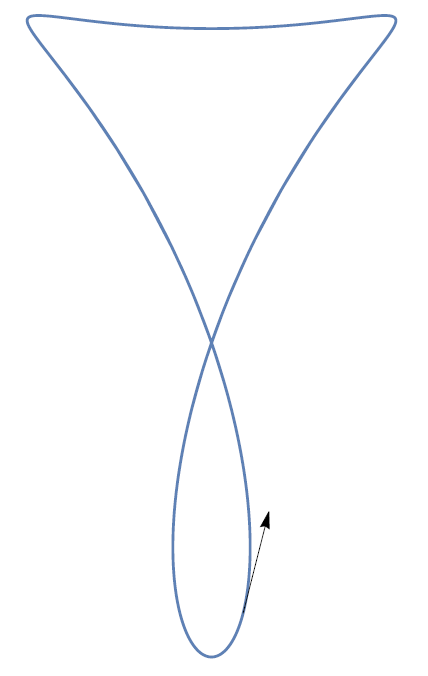}
        \caption{Negative}
        \label{fig:MiddleNegative}
    \end{subfigure}
    \hfill
    \begin{subfigure}[h]{0.28\textwidth}
        \centering
        \includegraphics[width=\textwidth]{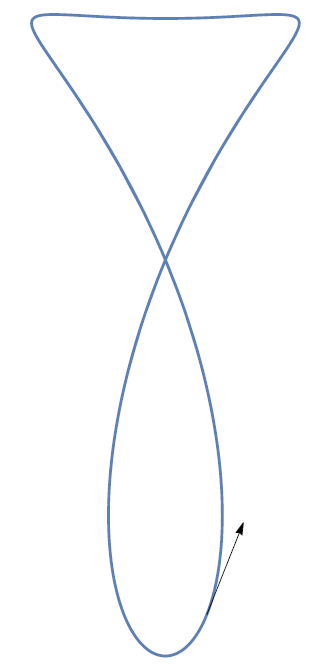}
        \caption{Zero}
        \label{fig:MiddleZero}
    \end{subfigure}
    \caption{$4$th Order Midpoint Sets with varying signs of their oriented areas (arrows indicate tangent vectors following the parameterization of the sets)}
    \label{fig:PosNegZeroOriented}
\end{figure}


\pagebreak

\section{Isoperimetric-Type Inequalities and Stability Results}\label{sec5}
\subsection{Isoperimetric-type inequalities}\label{ssec51}
\noindent In this subsection, we establish a family of isoperimetric-type inequalities associated with the sets introduced in earlier sections -- notably, the $k$th Order Preserving Set and the $k$th Order Midpoint Set.

\begin{thm}\label{ThmIsoEq1}
Let $k\in \mathbb{N}, k>2$, $\mathcal{O}$ be an oval, and $\mathcal{P}_k$ be its $k$th Order Preserving Set. Then
\begin{align}\label{IsoEq}
L_{\mathcal{O}}^2 - 4\pi A_{\mathcal{O}} \geqslant 4\pi |A_{\mathcal{P}_k}| ,
\end{align}
where $L_{\mathcal{O}}, A_{\mathcal{O}}, A_{\mathcal{P}_k}$ are the length of $\mathcal{O}$, the area bounded by $\mathcal{O}$ and the oriented area of the $k$th Order Preserving Set $\mathcal{P}_k$, respectively. Equality holds if and only if every equiangular $k$-gon circumscribed about $\mathcal{O}$ is a regular $k$-gon with its center of mass at the Steiner point of $\mathcal{O}$.
\end{thm}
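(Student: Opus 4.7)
The plan is to convert everything to Fourier coefficients of the support function $h$ of $\mathcal{O}$ and compare the two sides term by term. Let $h(s)=a_0+\sum_{n\geqslant 1}\bigl(a_n\cos(ns)+b_n\sin(ns)\bigr)$. By the identity \eqref{isdef} established in Section~\ref{sec2}, the left-hand side admits the closed form
\begin{align*}
L_{\mathcal{O}}^2-4\pi A_{\mathcal{O}} = 2\pi^2\sum_{n=2}^{\infty}(n^2-1)(a_n^2+b_n^2).
\end{align*}

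Next I would compute $A_{\mathcal{P}_k}$. By Proposition~\ref{Preservingsuppth}, the support function of $\mathcal{P}_k$ is $h_k(s)=\sum_{k\mid n,\,n>1}(a_n\cos(ns)+b_n\sin(ns))$, whose constant term is zero. Since $\mathcal{P}_k$ is a hedgehog, the Fourier formula for the oriented area of a hedgehog \eqref{HAreaFourier} applies with $a_0=0$, yielding
\begin{align*}
A_{\mathcal{P}_k} = -\frac{\pi}{2}\sum_{k\mid n,\,n>1}(n^2-1)(a_n^2+b_n^2)\leqslant 0,
\end{align*}
so $4\pi |A_{\mathcal{P}_k}| = 2\pi^2\sum_{k\mid n,\,n>1}(n^2-1)(a_n^2+b_n^2)$.

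Comparing the two expressions, the inequality \eqref{IsoEq} is equivalent to
\begin{align*}
\sum_{\substack{n\geqslant 2\\ k\nmid n}}(n^2-1)(a_n^2+b_n^2)\geqslant 0,
\end{align*}
which is immediate because every summand is nonnegative. This is the only real step, and there is no serious obstacle; the work has already been done in setting up the Fourier dictionary in Section~\ref{sec2} and in Proposition~\ref{Preservingsuppth}.

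For the equality case, the chain of equivalences above forces $a_n=b_n=0$ for every $n\geqslant 2$ with $k\nmid n$. Hence the support function of $\mathcal{O}$ reduces to
\begin{align*}
h(s)=a_0+a_1\cos(s)+b_1\sin(s)+\sum_{k\mid n,\,n>1}\bigl(a_n\cos(ns)+b_n\sin(ns)\bigr),
\end{align*}
and by Proposition~\ref{regularequiv} this is precisely the condition that every equiangular circumscribed $k$-gon of $\mathcal{O}$ is regular with centroid at the Steiner point $S(\mathcal{O})=(a_1,b_1)$. Conversely, if this geometric condition holds, Proposition~\ref{regularequiv} gives the same vanishing of Fourier coefficients and the two series above coincide, producing equality. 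This closes the characterization.
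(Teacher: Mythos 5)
Your proposal is correct and follows essentially the same route as the paper: express the isoperimetric deficit via \eqref{isdef}, compute $A_{\mathcal{P}_k}$ from Proposition~\ref{Preservingsuppth} together with the hedgehog area formula \eqref{HAreaFourier} (the paper invokes Parseval for the same computation), observe that the deficit equals $4\pi|A_{\mathcal{P}_k}|$ plus the nonnegative sum over indices not divisible by $k$, and settle equality via Proposition~\ref{regularequiv}. No gaps; this is the paper's argument.
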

\begin{proof}
Let us consider $A_{\mathcal{P}_k}$ -- the oriented area of $\mathcal{P}_k$. By Proposition \ref{Preservingsuppth}, the set $\mathcal{P}_k$ is a hedgehog with support function $h_k$ and by equation \eqref{HAreaFourier}, we get
 \begin{align}\label{eq1}
 A_{\mathcal{P}_k} &= \frac{1}{2}\int \limits_{0}^{2\pi} \left(h_k(s)^2 - h_k'^2(s)\right)\; \d s,
 \end{align}
 where $h$ is the Minkowski support function of $\mathcal{O}$. Next, applying Parseval's theorem, we obtain:
 \begin{align*}
 A_{\mathcal{P}_k} &= \frac{\pi}{2}\sum_{k \mid n, \; n >1}^{\infty} (1-n^2)(a_{n}^2 + b_{n}^2),\\
 4\pi |A_{\mathcal{P}_k}| &= 2\pi^2\sum_{k \mid n, \; n >1}^{\infty} (n^2-1)(a_{n}^2 + b_{n}^2).
 \end{align*}
Now, considering the isoperimetric deficit (see equation \eqref{isdef}), we observe:
{\begin{align}\label{isdef2}
    L_{\mathcal{O}}^2 - 4\pi A_{\mathcal{O}} &= 4\pi |A_{\mathcal{P}_k}| + 2\pi^2\sum_{k \nmid n,\ n>1} (n^2-1)(a_{n}^2 + b_{n}^2) \\ \nonumber &\geqslant 4\pi |A_{\mathcal{P}_k}| + 6\pi^2\sum_{k \nmid n,\ n>1}(a_{n}^2 + b_{n}^2)\geqslant 4\pi |A_{\mathcal{P}_k}|.
\end{align}}
It is evident that the equality case in inequality \eqref{isdef2} holds if and only if \linebreak $\sum_{k \nmid n} (n^2-1)(a_{n}^2 + b_{n}^2) = 0$, which means that the support function omits indices not divisible by $k$ and greater than $1$. By Proposition \ref{regularequiv}, we complete the proof.
\end{proof}

\begin{thm}\label{ThmIsoEq2}
Let $k\in \mathbb{N}, k>2$, $\mathcal{O}$ be an oval, $\mathcal{P}_k$ be its $k$th Order Preserving Set, and $\Omega_{\mathcal{O},k}$ its $k$th Order Midpoint Set. Then
\begin{align}\label{IsoEq2}
L_{\mathcal{O}}^2 - 4\pi A_{\mathcal{O}} \geqslant 4\pi |A_{\mathcal{P}_k}| + 2\pi |A_{\Omega_{\mathcal{O},k}}|,
\end{align}
where $L_{\mathcal{O}}, A_{\mathcal{O}}, A_{\mathcal{P}_k},A_{\Omega_{\mathcal{O},k}}$ are the length of $\mathcal{O}$, the area bounded by $\mathcal{O}$, the oriented area of the $k$th Order Preserving Set $\mathcal{P}_k$ and the oriented area of the $k$th Order Midpoint Set $\Omega_{\mathcal{O},k}$, respectively. Equality holds if and only if every equiangular \linebreak $k$-gon circumscribed about $\mathcal{O}$ is a regular $k$-gon with its center of mass at the Steiner point of $\mathcal{O}$.
\end{thm}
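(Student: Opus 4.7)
The plan is to build directly on the proof of Theorem \ref{ThmIsoEq1} and absorb the extra $k$th Order Midpoint Set term by a term-by-term comparison of Fourier coefficients. From the argument already carried out for Theorem \ref{ThmIsoEq1} we have the exact identity
\begin{align*}
L_{\mathcal{O}}^2 - 4\pi A_{\mathcal{O}} = 4\pi|A_{\mathcal{P}_k}| + 2\pi^2\!\!\sum_{k\nmid m,\ m>1}(m^2-1)(a_m^2+b_m^2),
\end{align*}
so it suffices to establish the residual inequality
\begin{align*}
2\pi^2\!\!\sum_{k\nmid m,\ m>1}(m^2-1)(a_m^2+b_m^2) \;\geqslant\; 2\pi|A_{\Omega_{\mathcal{O},k}}|.
\end{align*}
The first step is to invoke Lemma~\ref{lemOrientedAreaMiddlePoint}, giving
\begin{align*}
2\pi|A_{\Omega_{\mathcal{O},k}}| = 2\pi^2\left|\sum_{n=1}^{\infty} n\bigl(a_{kn-1}^2+b_{kn-1}^2 - a_{kn+1}^2 - b_{kn+1}^2\bigr)\right|,
\end{align*}
and then apply the triangle inequality to split this into the two nonnegative sums
$\sum_{n\geq 1} n(a_{kn-1}^2+b_{kn-1}^2)$ and $\sum_{n\geq 1} n(a_{kn+1}^2+b_{kn+1}^2)$.

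The second step is the key coefficient-wise comparison. On the left-hand side, restrict attention to the subset of indices of the form $m=kn\pm 1$ (the other admissible $m$'s contribute nonnegatively and can simply be discarded). For such $m$ the multiplier is $(kn\pm 1)^2-1=k^2n^2\pm 2kn$, while on the right-hand side the matching multiplier is only $n$. One then verifies the elementary bounds $k^2n^2+2kn\geqslant n$ and $k^2n^2-2kn\geqslant n$ for all $n\geqslant 1$ and $k\geqslant 3$; the latter is the binding one and reduces (at $n=1$) to $k^2-2k-1\geqslant 0$, which holds for every integer $k\geqslant 3$. This establishes \eqref{IsoEq2}.

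For the equality case, note first that for $k\geqslant 3$ and $n\geqslant 1$ the inequality $k^2n^2-2kn\geqslant n$ is \emph{strict}, and likewise for $k^2n^2+2kn\geqslant n$. Hence equality in \eqref{IsoEq2} forces $a_m=b_m=0$ for every $m=kn\pm 1$; combined with the discarded indices $m>1$, $k\nmid m$, $m\not\equiv\pm 1\pmod k$, which must also vanish, this means the support function of $\mathcal{O}$ has the form
\begin{align*}
h(s)=a_0+a_1\cos(s)+b_1\sin(s)+\!\!\sum_{k\mid n,\ n>1}\!\! a_n\cos(ns)+b_n\sin(ns).
\end{align*}
By Proposition~\ref{regularequiv}, this is exactly the condition that every equiangular $k$-gon circumscribed about $\mathcal{O}$ is regular with centroid at the Steiner point of $\mathcal{O}$, and conversely any such $\mathcal{O}$ makes both sides of \eqref{IsoEq2} agree. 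The main technical obstacle is really just the bookkeeping of the coefficient inequality: one must be careful to treat the residual indices $m\not\equiv\pm 1\pmod k$ (which appear in the deficit but not in $A_{\Omega_{\mathcal{O},k}}$) as a positive slack, and to verify that the coefficient comparison is strict for all admissible $(n,k)$ so that the characterization of equality is clean.
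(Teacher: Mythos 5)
Your proof is correct and follows essentially the same route as the paper: the same reduction to the residual sum from Theorem \ref{ThmIsoEq1}, the same use of Lemma \ref{lemOrientedAreaMiddlePoint}, the same coefficient-wise comparison on the indices $m\equiv\pm 1\pmod k$ (with the non-residue indices kept as nonnegative slack), and the same appeal to Proposition \ref{regularequiv} with strictness of $(kn\pm 1)^2-1>n$ for the equality case. The only cosmetic difference is that you bound $|A_{\Omega_{\mathcal{O},k}}|$ by the triangle inequality, whereas the paper tracks the sign of $A_{\Omega_{\mathcal{O},k}}$ via a $\mp/\pm$ bookkeeping; both reduce to the same elementary inequalities.
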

\begin{proof}
By Lemma \ref{lemOrientedAreaMiddlePoint}, the oriented area of the $k$th Order Midpoint Set is
$$A_{\Omega_{\mathcal{O},k}} = \pi \sum_{n=1}^\infty n(a^2_{kn-1}+b^2_{kn-1}-a^2_{kn+1}-b^2_{kn+1}).$$
    Following the previous proof, we obtain:
    {\small\begin{align*}
        & L_{\mathcal{O}}^2 - 4\pi A_{\mathcal{O}} - 4\pi |A_{\mathcal{P}_k}| - 2\pi|A_{\Omega_{\mathcal{O},k}}| \\= \ &2\pi^2\left(\sum_{k \nmid n} (n^2-1)(a_{n}^2 + b_{n}^2) \mp \sum_{n=1}^\infty n\left(a^2_{kn-1}+b^2_{kn-1}\right)\pm\sum_{n=1}^\infty n\left(a^2_{kn+1}+b^2_{kn+1}\right)\right)\\=\ & 2\pi^2\Bigg(\sum_{k \nmid n,\ n\not \equiv_k\pm 1} (n^2-1)(a_{n}^2 + b_{n}^2) +\sum_{k \nmid n,\ n\equiv_k\pm 1} (n^2-1)(a_{n}^2 + b_{n}^2)\\ & \mp \sum_{n=1}^\infty n\left(a^2_{kn-1}+b^2_{kn-1}\right)\pm\sum_{n=1}^\infty n\left(a^2_{kn+1}+b^2_{kn+1}\right)\Bigg)\\=\ & 2\pi^2\Bigg(\sum_{k \nmid n,\ n\not \equiv_k\pm 1} (n^2-1)(a_{n}^2 + b_{n}^2) +\sum_{n=kj\pm 1,\ j\in \mathbb{N}_+} (n^2-1)(a_{n}^2 + b_{n}^2)\\ & \mp \sum_{n=1}^\infty n\left(a^2_{kn-1}+b^2_{kn-1}\right)\pm\sum_{n=1}^\infty n\left(a^2_{kn+1}+b^2_{kn+1}\right)\Bigg)\\=\ & 2\pi^2\Bigg(\sum_{k \nmid n,\ n\not \equiv_k\pm 1} (n^2-1)(a_{n}^2 + b_{n}^2) +\sum_{n=1} ^\infty((kn\pm 1)^2-1)(a_{kn\pm 1}^2 + b_{kn\pm 1}^2)\\ & \mp \sum_{n=1}^\infty n\left(a^2_{kn-1}+b^2_{kn-1}\right)\pm\sum_{n=1}^\infty n\left(a^2_{kn+1}+b^2_{kn+1}\right)\Bigg)\\=\ & 2\pi^2\Bigg(\sum_{k \nmid n,\ n\not \equiv_k\pm 1} (n^2-1)(a_{n}^2 + b_{n}^2) \\ & + \sum_{n=1}^\infty ((kn-1)^2\mp n-1)\left(a^2_{kn-1}+b^2_{kn-1}\right)+\sum_{n=1}^\infty ((kn+1)^2\pm n-1)\left(a^2_{kn+1}+b^2_{kn+1}\right)\Bigg) \geqslant 0.\end{align*}}
        
        \noindent The preceding inequality is true because $k>1$. Therefore, this concludes the proof of inequality \eqref{IsoEq2}. Since $n^2-1>0$, $(kn+1)^2\pm n-1>0$ and $(kn-1)^2\mp n-1 > 0$, the~equality case occurs if and only if all the coefficients  $a_n, b_n$, where $k\nmid n,\ n\not \equiv_k\pm 1$, and $n \equiv_k\pm 1$, are zero. This leaves only the coefficients $a_n,b_n$ such that $k \mid n$. Equivalently, the support function of the oval $\mathcal{O}$ is of the following form:
    $$h(s) = a_0 +a_1\cos(s) + b_1\sin(s)+ \sum_{k\mid n, \;n>1}a_n\cos(ns) + b_n\sin(ns),$$
    i.e., every equiangular $k$-gon circumscribed about $\mathcal{O}$ is a regular $k$-gon with its center of mass at the Steiner point of $\mathcal{O}$ (see Proposition \ref{regularequiv}).
\end{proof}

   \subsection{Stability results}\label{ssec52}
In addition to proving the inequalities themselves, we investigate the stability of selected cases, analyzing how deviations from equality reflect the geometric and analytic perturbations of the input oval.  
A \textit{$2$-dimensional convex body} in $\mathbb{R}^2$ is a bounded convex subset in $\mathbb{R}^2$ which is closed and has non-empty interior. By $\mathcal{C}_2$ we denote the class of all $2$-dimensional convex bodies.

An inequality in convex geometry can be written as
\begin{align}\label{IneqConvexGeometry} 
\Phi(K)\geqslant 0,
\end{align}
where $\Phi:\mathcal{C}_2\to\mathbb{R}$ is a function and inequality \eqref{IneqConvexGeometry} holds for all $K$ in $\mathcal{C}_2$. Let $\mathcal{C}_{2,{\Phi}}$ be a subset of $\mathcal{C}_2$ for which the equality in \eqref{IneqConvexGeometry} holds. 

Let $L_{\partial K}$ (respectively $A_{\partial K}$) denote the length of the boundary of $K$ (respectively the area enclosed by $\partial K$, i.e., the area of $K$).

In this section, we study stability properties associated with (\ref{IneqConvexGeometry}). We ask if $K$ must be close to a member of $\mathcal{C}_{2,{\Phi}}$ whenever $\Phi(K)$ is close to zero. If $d:\mathcal{C}_2\times\mathcal{C}_2\to\mathbb{R}$ satisfies the following conditions:
\begin{enumerate}[(i)]
\item $d(K,L)\geqslant 0$ for all $K,L\in\mathcal{C}_2$,
\item $d(K,L)=0$ if and only if $K=L$,
\end{enumerate}
then $d$ measures the deviation between two convex bodies.

If $\Phi, \mathcal{C}_{2,{\Phi}}$ and $d$ are given, then the \textit{stability problem} associated with (\ref{IneqConvexGeometry}) is as follows.

\begin{prob}Find positive constants $c,\alpha$ such that for each $K\in\mathcal{C}_2$, there exists $N\in \mathcal{C}_{2,{\Phi}}$, such that 
\begin{align}\label{StabilityIneq}
\Phi(K)\geqslant cd^{\alpha}(K,N).
\end{align}
\end{prob}

Let $h_{\partial K}$ and $h_{\partial N}$ be support functions of convex bodies $K$ and $N$, respectively. To measure the deviation between $K$ and $N$ one can use the \textit{Hausdorff distance}, $d_{\infty}$, and the measure that corresponds to the $L_2$-metric in the function space, $d_2$, which are given by:
\begin{align}\label{HausdorffDistance}
d_{\infty}(K,N)&=\max_{s}\Big|h_{\partial K}(s)-h_{\partial N}(s)\Big|,\\
\label{LTwoDistance}
d_2(K,N)&=\left(\int_0^{2\pi}\Big|h_{\partial K}(s)-h_{\partial N}(s)\Big|^2\d s\right)^{\frac{1}{2}}.
\end{align}

It is immediate that $d_{\infty}(K,N)=0$ (or $d_2(K,N)=0$) if and only if $K=N$.

\begin{defn}
    Consider a 2-dimensional convex body $\mathcal{C}$ and $k>2$. We define its \textit{$k$-central symmetral} $\mathcal{S}_{k,\mathcal{C}}$ as the following Minkowski sum: 
    $$\mathcal{S}_{k,\mathcal{C}}:= \frac{1}{k}\left( \mathcal{C} + \operatorname{rot}\left(\mathcal{C},\frac{2\pi}{k}\right) + \cdots + \operatorname{rot}\left(\mathcal{C},\frac{2\pi(k-1)}{k}\right)\right),$$
    where $\operatorname{rot}\left(\mathcal{C},\alpha\right)$ is the counterclockwise rotation of $\mathcal{C}$ about the origin by an angle $\alpha$.
\end{defn}

Note that $k$-central symmetral $\mathcal{S}_{k,\mathcal{C}}$ is the Minkowski average of $k$ rotated copies of $\mathcal C$ about the origin; 
it can be viewed as a discrete rotational symmetrization. We recall that the classical \emph{central symmetral} is
\[
\Delta\mathcal{C} = \tfrac12\big(\mathcal{C}+(-\mathcal{C})\big),
\]
i.e., the Minkowski symmetrization with respect to the origin (\cite{SchneiderBook}).

    The $k$-central symmetral $\mathcal{S}_{k,\mathcal{C}}$ of a given convex body $\mathcal{C}$ is convex, as it is the~Minkowski sum of convex bodies.

    Consider a convex body $\mathcal{C}$ and its $k$th Order Preserving Set $\mathcal{P}_k$. One may check that $\mathcal{P}_k + D_{\mathcal{C}}$ is convex as the $k$-central symmetral of $\mathcal{C}$, where $D_{\mathcal{C}}$ is the Steiner disk of $\mathcal{C}$.
    Furthermore, it is evident that the Fourier expansion of the support function of $\mathcal{C}$ retains only indices divisible by $k$ if and only if
    $$\mathcal{C} = \mathcal{S}_{k,\mathcal{C}} = \mathcal{P}_k + D_{\mathcal{C}}.$$

\begin{rem}
Let $h$ be the support function of a hedgehog $\mathcal{H}$.
Define the \linebreak \textit{$k$-directional averaging operator}
$$\mathcal A_k[h](s):=\frac{1}{k}\sum_{j=0}^{k-1} h\!\left(s+\frac{2\pi j}{k}\right).$$
Our $k$th Order Preserving Set, $\mathcal P_k$, is the hedgehog with support function
$$h_{\mathcal P_k}=\mathcal A_k[h]-\frac{1}{2}\overline{w},$$
equivalently
$$h_{\mathcal P_k+D_{\mathcal{H}}}=\mathcal A_k[h],$$
i.e., the Minkowski sum $\mathcal P_k+D_{\mathcal{H}}$ restores the constant term.

The generalized $k$-width of Ou and Pan (\cite{OuPan}) is
\[
  w_k(s)=\sum_{j=0}^{k-1} h\left(s+\frac{2\pi j}{k}\right).
\]
So $\mathcal A_k[h]=\frac{1}{k}w_k$. Kwong in \cite{kwong} uses the closely related
operator
$$
  T_k[h](s):=\frac{1}{k}\sum_{m=1}^{k} h\left(s+\frac{(2m-1)\pi}{k}\right)
                 =\mathcal A_k[h]\left(s+\frac{\pi}{k}\right),
$$
whose Fourier coefficients are intimately connected with the coefficients in the Fourier series for $h_{\mathcal{P}_k}$. Namely (see \eqref{eqFourierkOPS}):
\begin{align*}
    h_k(s) &= \sum_{n=kq,\ q> 0}^\infty a_n\cos(ns) + b_n\sin(ns),\\ 
    T_k[h](s) &=\sum_{n=kq,\ q\geqslant 0}^{\infty}(-1)^q\left(a_n\cos(ns) + b_n\sin(ns)\right).
\end{align*}
In particular,
$T_k$ preserves the constant term. This operator underlies higher-order Chernoff-type inequalities for ovals, with equality holding only for circles. One of the simplest of those inequalities in \cite{kwong} is
\begin{align*}
A_{\mathcal{O}}\leqslant\frac{1}{k}\int_{0}^{\pi/k}
  w_k(s)\,w_k\left(s+\frac{\pi}{k}\right)\,\d s,
\end{align*}
which is one of the main results in \cite{OuPan}.

In summary of this remark, $\mathcal P_k$ encodes in some way $k$-directional averaging as
$w_k$ and $T_k$, but in a zero-mean normalization: we remove
the constant Fourier term.
Consequently, our stability bounds are stated for the body $\mathcal P_k+D_{\mathcal{H}}$ (see Theorem \ref{Thmstab1} and \ref{Thmstab2}), whose support function is exactly
$\mathcal A_k[h]$, making it directly comparable with the
Ou--Pan and Kwong framework. In addition, our main inequalities point in the \textit{ same} direction as the classical isoperimetric inequality (it yields
a lower bound for the isoperimetric deficit), in contrast to Chernoff-type
inequalities, which are examples of the so-called \textit{reversed isoperimetric-type inequalities} bounds
(upper bounds on the area, in this case in terms of average widths).
\end{rem}

\begin{thm}\label{Thmstab1}
Let $k\in \mathbb{N},\ k>2$, $\mathcal{O}$ be an oval, $\mathcal{P}_k$ be its $k$th Order Preserving Set, and $D_{\mathcal{O}}$ its Steiner disk. Then
\begin{align*}
     L_{\mathcal{O}}^2 - 4\pi A_{\mathcal{O}}-4\pi |A_{\mathcal{P}_k}|\geqslant\frac{8\pi^2k}{2 \pi \cot\left(\frac{\pi}{k}\right) +k}d^2_\infty(\mathcal{O},\mathcal{P}_k+ D_{\mathcal{O}}).
\end{align*}
\end{thm}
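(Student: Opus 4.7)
My plan is to reduce the stability bound to a weighted Fourier inequality that can be read off from the spectral identities established for Theorem~\ref{ThmIsoEq1}, and then to evaluate an explicit sum over the ``missing'' frequencies via the partial fraction expansion of the cotangent.

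First, recall from the proof of Theorem~\ref{ThmIsoEq1} that
\begin{align*}
L_{\mathcal{O}}^{2}-4\pi A_{\mathcal{O}}-4\pi|A_{\mathcal{P}_{k}}|
=2\pi^{2}\sum_{k\nmid n,\ n>1}(n^{2}-1)(a_{n}^{2}+b_{n}^{2}).
\end{align*}
On the other hand, the Steiner disk $D_{\mathcal{O}}$ has support function $a_{0}+a_{1}\cos(s)+b_{1}\sin(s)$, and by Proposition~\ref{Preservingsuppth} the support function of $\mathcal{P}_{k}+D_{\mathcal{O}}$ is $a_{0}+a_{1}\cos(s)+b_{1}\sin(s)+\sum_{k\mid n,\,n>1}(a_{n}\cos(ns)+b_{n}\sin(ns))$. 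Consequently
\begin{align*}
h(s)-h_{\mathcal{P}_{k}+D_{\mathcal{O}}}(s)=\sum_{k\nmid n,\ n>1}\bigl(a_{n}\cos(ns)+b_{n}\sin(ns)\bigr),
\end{align*}
so $d_{\infty}(\mathcal{O},\mathcal{P}_{k}+D_{\mathcal{O}})$ is exactly the uniform norm of this ``defect'' series, whose Fourier support is the set $S:=\{n\in\mathbb{Z}_{>1}:k\nmid n\}$.

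Next, I would apply the Cauchy--Schwarz inequality twice. Pointwise, $|a_{n}\cos(ns)+b_{n}\sin(ns)|\leqslant\sqrt{a_{n}^{2}+b_{n}^{2}}$, so for every $s$,
\begin{align*}
\bigl|h(s)-h_{\mathcal{P}_{k}+D_{\mathcal{O}}}(s)\bigr|^{2}
\leqslant\Bigl(\sum_{n\in S}\sqrt{a_{n}^{2}+b_{n}^{2}}\Bigr)^{2}
\leqslant\Bigl(\sum_{n\in S}\tfrac{1}{n^{2}-1}\Bigr)\Bigl(\sum_{n\in S}(n^{2}-1)(a_{n}^{2}+b_{n}^{2})\Bigr).
\end{align*}
Thus the whole problem reduces to computing $C_{k}:=\sum_{n\in S}\tfrac{1}{n^{2}-1}$ and verifying $C_{k}=\frac{k+2\pi\cot(\pi/k)}{4k}$, for then multiplying the previous display by $2\pi^{2}/C_{k}$ and taking the supremum over $s$ yields exactly the claimed constant $\frac{8\pi^{2}k}{2\pi\cot(\pi/k)+k}$.

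The main computational obstacle is the evaluation of $C_{k}$. I would split
\begin{align*}
C_{k}=\sum_{n=2}^{\infty}\tfrac{1}{n^{2}-1}-\sum_{m=1}^{\infty}\tfrac{1}{(km)^{2}-1}.
\end{align*}
The first series telescopes to $\tfrac{3}{4}$. For the second, I would invoke the classical identity
\begin{align*}
\pi\cot(\pi x)=\tfrac{1}{x}+2x\sum_{m=1}^{\infty}\tfrac{1}{x^{2}-m^{2}}
\end{align*}
with $x=1/k$, which after rearrangement gives $\sum_{m=1}^{\infty}\tfrac{1}{(km)^{2}-1}=\tfrac{1}{2}-\tfrac{\pi\cot(\pi/k)}{2k}$. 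Subtracting yields $C_{k}=\tfrac{1}{4}+\tfrac{\pi\cot(\pi/k)}{2k}=\tfrac{k+2\pi\cot(\pi/k)}{4k}$, which closes the argument. The cotangent identity is the only nontrivial analytic input; the rest is bookkeeping of Fourier coefficients already set up in Section~\ref{sec3}.
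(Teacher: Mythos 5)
Your proposal is correct and follows essentially the same route as the paper's proof: both express the defect $h-h_{\mathcal{P}_k+D_{\mathcal{O}}}$ as the Fourier series supported on $\{n>1:\ k\nmid n\}$, bound its uniform norm by the triangle inequality plus Cauchy--Schwarz with weights $\tfrac{1}{n^2-1}$ and $n^2-1$, and evaluate $\sum_{k\nmid n,\ n>1}\tfrac{1}{n^2-1}=\tfrac{k+2\pi\cot(\pi/k)}{4k}$ via the partial fraction expansion of the cotangent before comparing with the spectral form of the deficit from Theorem~\ref{ThmIsoEq1}. The only differences are cosmetic (you compute the constant as a full sum minus the sum over multiples of $k$, while the paper telescopes directly over the restricted index set), and your value of the constant matches the paper's.
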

\begin{proof}
     Given the support function $h$ of $\mathcal{O}$, let us consider the following:
     \begin{align*}       
     d_\infty(\mathcal{O},\mathcal{P}_k+ D_{\mathcal{O}})&=\max_s\Big|h(s)-\Big(h_k(s) + h_{D_{\mathcal{O}}}(s)\Big)\Big|\\ &=\max_s\Big|\sum_{k \nmid n,\; n>1}a_n\cos(ns)+b_n\sin(ns)\Big|\\
     &\leqslant \sum_{k \nmid n, \;n>1}\max_s|a_n\cos(ns)+b_n\sin(ns) |\\
     &\leqslant  \sum_{k \nmid n,\; n>1}\sqrt{a_n^2+b_n^2 }=\sum_{k \nmid n,\; n>1}\sqrt{\frac{1}{n^2-1}}\sqrt{(n^2-1)(a_n^2+b_n^2) } \\
     &\leqslant\sqrt{\sum_{k \nmid n,\; n>1}\frac{1}{n^2-1}} \cdot\sqrt{\sum_{k \nmid n,\; n>1}(n^2-1)(a_n^2+b_n^2)}\\
     &= \sqrt{\frac{1}{2}\sum_{k \nmid n,\; n>1}\frac{1}{n-1} - \frac{1}{n+1}}\cdot\sqrt{\sum_{k \nmid n,\; n>1}(n^2-1)(a_n^2+b_n^2)}.\end{align*} Using the well-known formula $\pi\cot\left(\pi x\right)=\frac{1}{x}+\sum_{n=1}^{\infty}\left(\frac{1}{x+n}+\frac{1}{x-n}\right)$ for $x\in\mathbb{R}\setminus\mathbb{Z}$ (e.g., see \cite{thebook}),  one obtains that:
      \begin{align*}       
     d_\infty(\mathcal{O},\mathcal{P}_k+ D_{\mathcal{O}})\leqslant \sqrt{\frac{1}{2} \left( \frac{\pi \cot\left(\frac{\pi}{k}\right)}{k} - 1 \right) + \frac{3}{4}}\cdot\sqrt{\sum_{k \nmid n,\; n>1}(n^2-1)(a_n^2+b_n^2)}.
\end{align*}
Which in turn gives us:
$$\frac{8\pi^2k}{2 \pi \cot\left(\frac{\pi}{k}\right) +k}d^2_\infty(\mathcal{O},\mathcal{P}_k+ D_{\mathcal{O}}) \leqslant 2\pi^2\sum_{k \nmid n,\; n>1}(n^2-1)(a_n^2+b_n^2).$$
By equality~\eqref{isdef2}, we complete the proof.
\end{proof}

\begin{thm}\label{Thmstab2}
Let $k\in \mathbb{N},\ k>2$, $\mathcal{O}$ be an oval, $\mathcal{P}_k$ be its $k$th Order Preserving Set, and $D_{\mathcal{O}}$ its Steiner disk. Then
\begin{align*}
     L_{\mathcal{O}}^2 - 4\pi A_{\mathcal{O}}-4\pi |A_{\mathcal{P}_k}|\geqslant 6\pi d^2_2(\mathcal{O},\mathcal{P}_k+ D_{\mathcal{O}}).
\end{align*}
\end{thm}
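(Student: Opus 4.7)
The plan is to mirror the strategy used in the proof of Theorem \ref{Thmstab1}, but replace the Hausdorff-norm estimate with a clean Parseval identity, which turns out to make the argument even more direct. The key observation is that the support function of $\mathcal{P}_k + D_{\mathcal{O}}$ is exactly $h_k(s) + h_{D_{\mathcal{O}}}(s)$, where $h_{D_{\mathcal{O}}}(s) = a_0 + a_1\cos(s) + b_1\sin(s)$. Subtracting this from the Fourier expansion of $h$ leaves precisely the harmonics that the $k$th Order Preserving Set discards, namely those with index $n>1$ not divisible by $k$.

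First, I would write down the difference of support functions:
\begin{align*}
h(s) - \bigl(h_k(s) + h_{D_{\mathcal{O}}}(s)\bigr) = \sum_{k\nmid n,\ n>1} a_n \cos(ns) + b_n \sin(ns).
\end{align*}
Then by Parseval's identity and the definition \eqref{LTwoDistance} of $d_2$,
\begin{align*}
d_2^2(\mathcal{O},\mathcal{P}_k + D_{\mathcal{O}}) = \pi \sum_{k\nmid n,\ n>1}(a_n^2 + b_n^2).
\end{align*}

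Next, I would recall from \eqref{isdef2} in the proof of Theorem~\ref{ThmIsoEq1} that
\begin{align*}
L_{\mathcal{O}}^2 - 4\pi A_{\mathcal{O}} - 4\pi|A_{\mathcal{P}_k}| = 2\pi^2 \sum_{k\nmid n,\ n>1}(n^2-1)(a_n^2 + b_n^2).
\end{align*}
Since every $n$ appearing in this sum satisfies $n \geqslant 2$, we have $n^2 - 1 \geqslant 3$, so
\begin{align*}
2\pi^2 \sum_{k\nmid n,\ n>1}(n^2-1)(a_n^2 + b_n^2) \geqslant 6\pi^2 \sum_{k\nmid n,\ n>1}(a_n^2 + b_n^2) = 6\pi\, d_2^2(\mathcal{O},\mathcal{P}_k + D_{\mathcal{O}}),
\end{align*}
which is the desired inequality.

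There is essentially no obstacle here. Unlike the previous theorem, no Hausdorff-type bound via the Cauchy--Schwarz inequality and the cotangent series is required, because the $L^2$-distance is itself a Parseval sum and matches the structure of the isoperimetric deficit expansion term-by-term. The only subtlety is to make sure that the constant term $a_0$ and the first harmonics $a_1,b_1$ do not contribute to the difference $h - h_k - h_{D_{\mathcal{O}}}$, which follows directly from the definitions of $h_k$ (see Proposition~\ref{Preservingsuppth}) and of the Steiner disk $D_{\mathcal{O}}$.
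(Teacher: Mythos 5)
Your argument is correct and coincides with the paper's own proof: both compute $d_2^2(\mathcal{O},\mathcal{P}_k+D_{\mathcal{O}})=\pi\sum_{k\nmid n,\ n>1}(a_n^2+b_n^2)$ via Parseval and then bound the deficit from \eqref{isdef2} using $n^2-1\geqslant 3$ for $n\geqslant 2$. No differences worth noting.
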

\begin{proof}
     Given $h$, the support function of $\mathcal{O}$, let us consider:
     \begin{align*}       
     d^2_2(\mathcal{O},\mathcal{P}_k+ D_{\mathcal{O}})&=\int_0^{2\pi}\Big|h(s)-\Big(h_k(s) + h_{D_{\mathcal{O}}}(s)\Big)\Big|^2\d s\\ &=
     \int_0^{2\pi}\Big|\sum_{k \nmid n,\; n>1}a_n\cos(ns)+b_n\sin(ns)\Big|^2\d s \\
     &= \pi\sum_{k \nmid n,\; n>1}a_n^2+b_n^2 \leqslant \frac{1}{6\pi}2\pi^2\sum_{k \nmid n, n>1 }(n^2-1)(a_n^2+b_n^2).
\end{align*}

Equality~\eqref{isdef2} now implies the result, completing the proof.
\end{proof}

As $k$ tends towards infinity, the $k$th Order Preserving Set, $\mathcal{P}_k$, tends to the origin (see Remark \ref{remSteiBall}). Therefore, as $k$ tends towards infinity, we obtain the following corollary of Theorems \ref{Thmstab1} and \ref{Thmstab2}.

\begin{cor}\label{cor:stability}
    Let $\mathcal{O}$ be an oval and $D_{\mathcal{O}}$ its Steiner disk. Then
\begin{align*}
     L_{\mathcal{O}}^2 - 4\pi A_{\mathcal{O}} &\geqslant\frac{8}{3}\pi^2d^2_\infty(\mathcal{O},D_{\mathcal{O}}),\\ 
     L_{\mathcal{O}}^2 - 4\pi A_{\mathcal{O}}&\geqslant 6\pi d^2_2(\mathcal{O},D_{\mathcal{O}}).
\end{align*}
\end{cor}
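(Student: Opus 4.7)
The plan is to derive both inequalities by passing $k\to\infty$ in Theorems~\ref{Thmstab1} and~\ref{Thmstab2}, exactly as signaled by the remark that $\mathcal{P}_k$ collapses to the origin (Remark~\ref{remSteiBall}). Fix an oval $\mathcal{O}$ with support function
\[
h(s)=a_0+\sum_{n\geqslant 1}\big(a_n\cos(ns)+b_n\sin(ns)\big).
\]
Since $h\in C^{\infty}$, its Fourier coefficients decay faster than any polynomial, so $\sum_{n\geqslant 2}\sqrt{a_n^2+b_n^2}<\infty$ and $\sum_{n\geqslant 2}(n^2-1)(a_n^2+b_n^2)<\infty$. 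Three $k$-dependent quantities appear in the two stability inequalities, and I would dispose of each in turn.

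First, applying \eqref{HAreaFourier} to the hedgehog $\mathcal{P}_k$ whose support function has the Fourier expansion \eqref{eqFourierkOPS} (with vanishing constant term),
\[
|A_{\mathcal{P}_k}|=\frac{\pi}{2}\sum_{k\mid n,\ n>1}(n^2-1)(a_n^2+b_n^2),
\]
which is a tail of a convergent series, so $|A_{\mathcal{P}_k}|\to 0$. Second, from the identity
\[
h(s)-h_k(s)-h_{D_{\mathcal{O}}}(s)=\sum_{k\nmid n,\ n>1}\big(a_n\cos(ns)+b_n\sin(ns)\big)
\]
established in the proof of Theorem~\ref{Thmstab1}, the difference between this quantity and $h(s)-h_{D_{\mathcal{O}}}(s)$ is bounded uniformly in $s$ by the tail $\sum_{k\mid n,\ n>1}\sqrt{a_n^2+b_n^2}$, which vanishes as $k\to\infty$. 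This yields $d_{\infty}(\mathcal{O},\mathcal{P}_k+D_{\mathcal{O}})\to d_{\infty}(\mathcal{O},D_{\mathcal{O}})$, and a parallel Parseval computation gives $d_2(\mathcal{O},\mathcal{P}_k+D_{\mathcal{O}})\to d_2(\mathcal{O},D_{\mathcal{O}})$.

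Third, using $\sin(\pi/k)\sim \pi/k$ and $\cos(\pi/k)\to 1$, one finds $\cot(\pi/k)\sim k/\pi$, hence
\[
\frac{8\pi^2 k}{2\pi\cot(\pi/k)+k}\longrightarrow\frac{8\pi^2 k}{2k+k}=\frac{8\pi^2}{3}.
\]
Substituting these three limits into Theorem~\ref{Thmstab1} produces the first inequality, and passing to the limit in Theorem~\ref{Thmstab2} (where the constant $6\pi$ is already independent of $k$) produces the second. No serious obstacle arises: the whole argument is a routine limit passage, and the only point deserving small care is the uniform-in-$s$ convergence of the Fourier tails, which is immediate from the rapid decay of the coefficients forced by the $C^{\infty}$ smoothness of $h$.
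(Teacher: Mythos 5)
Your proposal is correct and follows essentially the same route as the paper, which obtains the corollary precisely by letting $k\to\infty$ in Theorems~\ref{Thmstab1} and~\ref{Thmstab2} (citing Remark~\ref{remSteiBall}); you merely spell out the three limit computations that the paper leaves implicit, and all of them check out, including $\frac{8\pi^2k}{2\pi\cot(\pi/k)+k}\to\frac{8\pi^2}{3}$. One could even skip the limit $|A_{\mathcal{P}_k}|\to 0$ by noting that dropping the nonnegative term $4\pi|A_{\mathcal{P}_k}|$ only strengthens the left-hand side, but your version is equally valid.
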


The inequalities in Corollary \ref{cor:stability} are classical -- see, for instance, inequalities (4.3.1) and (4.3.2) in \cite{G4}.

Theorems~\ref{Thmstab1} and \ref{Thmstab2} quantify the stability: the isoperimetric
deficit controls the deviation of an oval \(\mathcal O\) from the class
\(\mathcal P_k+D_{\mathcal{O}}\) when this deviation is measured by \(d_\infty\) or \(d_2\).
In particular, if the deficit $L_{\mathcal{O}}^2 - 4\pi A_{\mathcal{O}}-4\pi |A_{\mathcal{P}_k}|$ is small, then \(\mathcal O\) is close (in either
distance) to some element of the equality case in Theorem \ref{ThmIsoEq1}. Conversely, vanishing deficit
forces \(d_p(\mathcal O,\mathcal P_k+D_{\mathcal{O}})=0\) for \(p\in\{2,\infty\}\).
Letting \(k\to\infty\) yields Corollary~\ref{cor:stability}, in which the
reference set reduces to the Steiner disk \(D_{\mathcal{O}}\) and the bounds acquire the
limiting constants \(\tfrac{8}{3}\pi^2\) and \(6\pi\). This completes the
stability part of the section and ties the general \(k\)-fold framework back to
the classical planar case.

\begin{ex}
To illustrate our stability estimates in practice, we now examine a~concrete oval defined by an explicit support function. In this way we
can compute all relevant quantities (like lengths, areas, and distances), and directly check the~isoperimetric inequality and the quantitative
bounds. Fix
\begin{align*}
h(s)=137 + 21\cos(2s) + \sin(5s)+\cos(6s)-\frac{1}{3}\sin(9s)+\frac{1}{3}\sin(10s)
\end{align*}
as a support function of an oval $\mathcal{O}$. The Steiner point of $\mathcal{O}$ is at the origin ($n=1$ harmonics vanish). Take $k=5$. Then, the support function of $5$th Order Preserving Set is
\begin{align}
\label{eq:exh}    h_5(s)=\sin(5s)+\frac{1}{3}\sin(10s).
\end{align}

In Figure~\ref{fig:exOPk} we present the oval $\mathcal O$ together with
its $5$-central symmetral and the associated $5$th order preserving set
$\mathcal P_5$. For clarity, in Figure~\ref{fig:exP5} we provide a~magnified view of $\mathcal P_5$ itself.

\pagebreak 

\begin{figure}[h]
    \centering
    \includegraphics[width=1\linewidth]{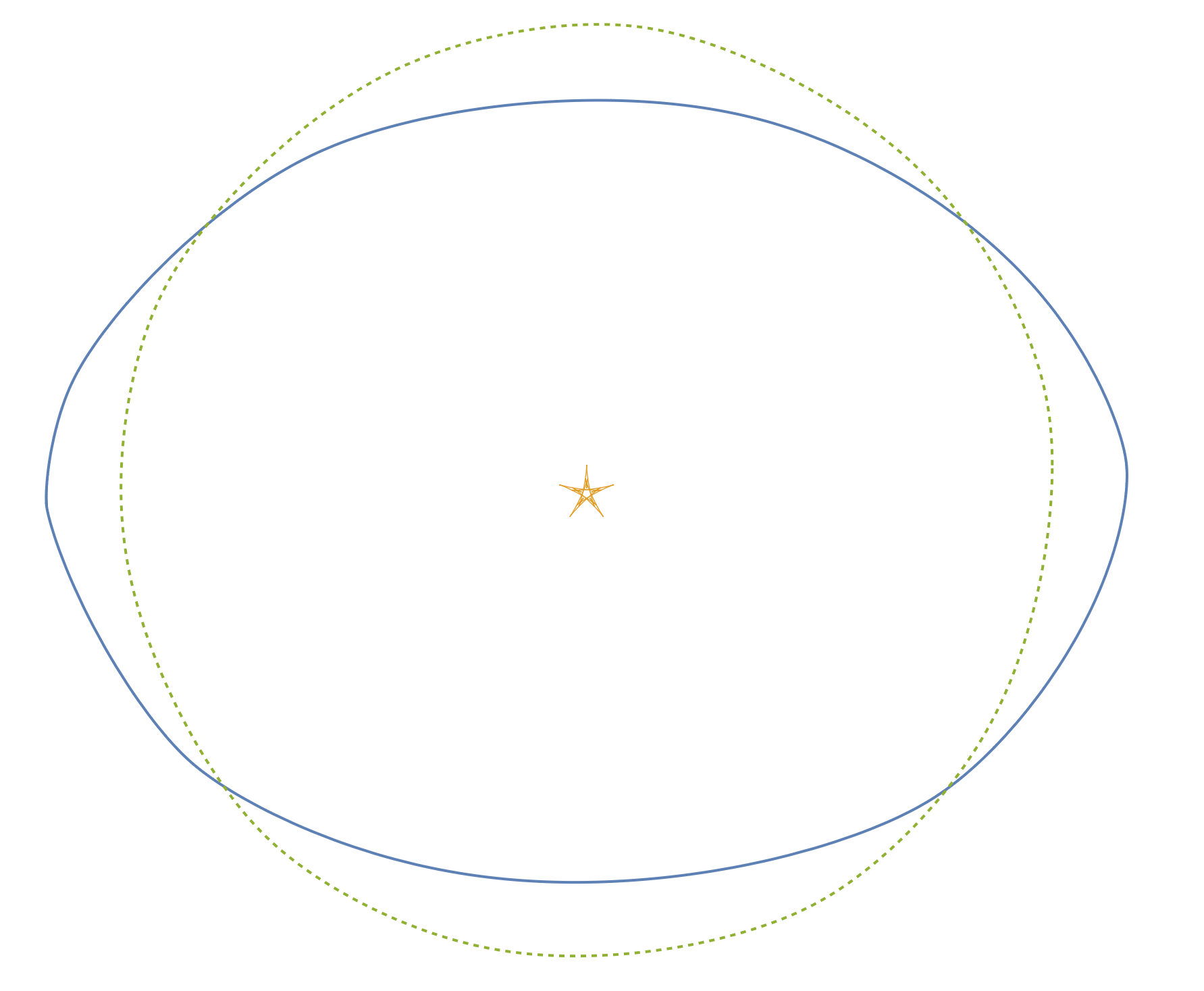}
    \caption{An oval $\mathcal{O}$ (solid line) with support function \eqref{eq:exh} together with its $5$-central symmetral (dashed line) and its $5$th Order Preserving Set (pentagonal curve)}
    \label{fig:exOPk}
\end{figure}

\begin{figure}[h]
    \centering
    \includegraphics[width=0.5\linewidth]{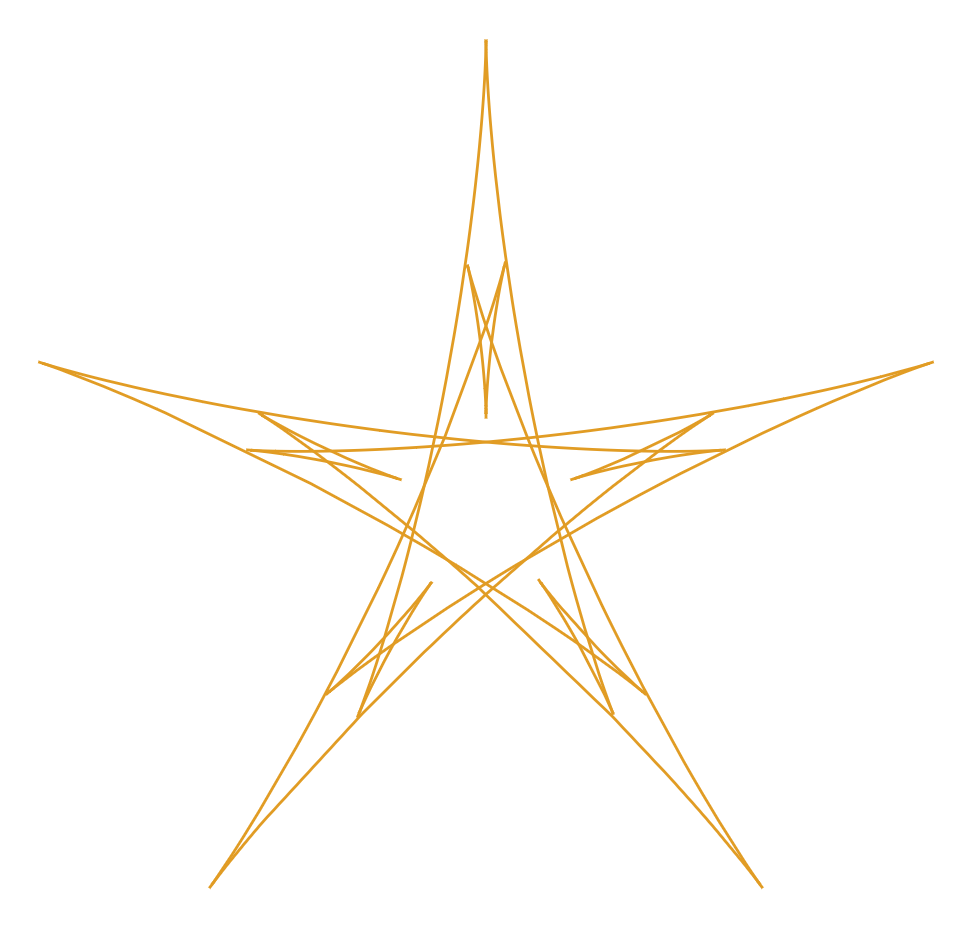}
    \caption{Magnified $5$th Order Preserving Set from Figure \ref{fig:exOPk}}
    \label{fig:exP5}
\end{figure}

\pagebreak

Then, by direct calculations:
\begin{align*}
    L_{\mathcal{O}}&=274\pi,\\
    A_{\mathcal{O}}&=\frac{325\, 225}{18}\pi,\\
    A_{\mathcal{P}_5}&=-\frac{35}{2}\pi,\\
    \Delta_5(\mathcal O):=L_{\mathcal{O}}^2-4\pi A_{\mathcal{O}}-4\pi\left|A_{\mathcal{P}_5}\right|&=\frac{24\, 604}{9}\pi^2\approx 26\, 981.3,\\ 
    d_{\infty}\left(\mathcal{O},\mathcal{P}_k+D_{\mathcal{O}}\right)&=\frac{67}{3},\\ 
    \frac{8\pi^2\cdot 5}{2\pi\cot\frac{\pi}{5}+5}d_{\infty}^2\left(\mathcal{O},\mathcal{P}_k+D_{\mathcal{O}}\right)&=\frac{179\, 560\pi^2}{9\left(5+2\sqrt{1+\frac{2}{\sqrt{5}}}\pi\right)}\approx 14\,427.7,\\
    d_2\left(\mathcal{O},\mathcal{P}_k+D_{\mathcal{O}}\right)&=\frac{\sqrt{3979\pi}}{3},\\
    6\pi d_2^2\left(\mathcal{O},\mathcal{P}_k+D_{\mathcal{O}}\right)&=\frac{7958\pi^2}{3}\approx 26\, 180.8.
\end{align*}
This confirms our inequality for the chosen $h$, and the value of the deficit
$\Delta_5(\mathcal O)$ lies above the two quantitative
lower bounds obtained from our stability estimates.
\end{ex}




\bibliographystyle{amsalpha}

\begin{thebibliography}{AAAA}

\bibitem{thebook} Aigner, M., Ziegler, G.M.: \emph{Cotangent and the Herglotz trick}, in: \emph{Proofs from THE BOOK}, Springer, Berlin, Heidelberg (2018).

\bibitem {Berry} Berry, M.V.: \emph{Semi-classical mechanics in phase space: a study of Wigner's function},
Philos. Trans. R. Soc. Lond. A \textbf{287} (1977), 237--271. 

\bibitem {CraizerAreaEvolute} Craizer, M.: \emph{Iteration of involutes of constant width curves in the Minkowski plane}, Beitr. Algebra Geom. \textbf{55} (2014), 479--496.

\bibitem {CGR1}  Cufi, J., Gallego, E., Revent\'os, A.: \emph{A note on Hurwitz's inequality},
J. Math. Anal. Appl. \textbf{458} (2018), 436--451.

\bibitem {DPSZ-art} Danielewska, I., Poławski, D., Sterczewska, D., Zwierzy\'nski, M.: \emph{Artistic Aspects of the Wigner Caustic and the Centre Symmetry Set}, Journal of Mathematics and the Arts, \textbf{19(3--4)} (2025), 119--135.

\bibitem{DFJSymmetryDefect} Dias, L.R.G., Farnik, M., Jelonek, Z.: \emph{Generic symmetry defect set of an algebraic curve},
Proc. Amer. Math. Soc. \textbf{152} (2024), no.~7, 2739--2749.

\bibitem {DMR1} Domitrz, W., Manoel, M., Rios, P. de M.: \emph{The Wigner caustic on shell and singularities of odd functions}, J. Geom. Phys. \textbf{71} (2013), 58--72.

\bibitem {DR1} Domitrz, W., Rios, P. de M.: \emph{Singularities of equidistants and global centre symmetry sets of Lagrangian submanifolds},
Geom. Dedicata \textbf{169} (2014), 361--382.

\bibitem {DRZsecant} Domitrz, W., Romero Fuster, M.C., Zwierzy\'nski, M.: \emph{The geometry of the secant caustic of a planar curve},
Differ. Geom. Appl. \textbf{78} (2021), 101797.

\bibitem{DZnew} Domitrz, W., Zwierzy\'nski, M.: \emph{Singular points of the Wigner caustic and affine equidistants of planar curves},
Bull. Braz. Math. Soc. (N. S.) \textbf{51} (2020), 11--26.

\bibitem {DZ1} Domitrz, W., Zwierzy\'nski, M.: \emph{The geometry of the Wigner caustic and a decomposition of a curve into parallel arcs},
Anal. Math. Phys. \textbf{12} (2022), Paper No. 7.

\bibitem {DZgb} Domitrz, W., Zwierzy\'nski, M.: \emph{The Gauss--Bonnet Theorem for coherent tangent bundles over surfaces with boundary and its applications}, J. Geom. Anal. \textbf{30} (2020), 3243--3274.

\bibitem  {G6} Gage, M. E.: \emph{Curve shortening makes convex curves circular}, Invent. Math. \textbf{76} (1984), 357--364.

\bibitem  {G7} Gage, M.E.: \emph{An isoperimetric inequality with applications to curve shortening}, Duke Math. J. \textbf{50} (1983), no. 4, 1225--1229.

\bibitem {GS1} Gao, L.Y., Pan, S.L.: \emph{Evolving convex curves to constant-width ones by a perimeter-preserving flow}, Pac. J. Math. \textbf{272} (2014), 131--145.

\bibitem  {GWZ1} Giblin, P.J., Warder, J.P., Zakalyukin, V.M.: \emph{Bifurcations of affine equidistants}, Proc. Steklov Inst. Math. \textbf{267} (2009), 59--75.

\bibitem  {G4} Groemer, H.: \emph{Geometric applications of Fourier series and spherical harmonics}, Encyclopedia of Mathematics and its Applications, Cambridge University Press, 1996, 181--310.

\bibitem  {G2} Groemer, H.: \emph{Stability properties of geometric inequalities}, in: P.M. Gruber, J.M. Wills (Eds.), \emph{Handbook of Convex Geometry}, North-Holland, 1993, 125--150.

\bibitem  {JJR1} Janeczko, S., Jelonek, Z., Ruas, M. A. S.: \emph{Symmetry defect of algebraic varieties}, Asian J. Math. \textbf{18} (2014), no. 3, 525--544.

\bibitem {kwonglee} Kwong, K.-K., Lee, H.: \emph{Higher order Wirtinger-type inequalities and sharp bounds for the isoperimetric deficit}, Proc. Am. Math. Soc. \textbf{149} (2021), 4825--4840.

\bibitem {kwong} Kwong, K.-K.: \emph{A unified approach to higher order discrete and smooth isoperimetric inequalities}, Mathematika \textbf{70} (2024), no. 1, Paper No. e12238.

\bibitem {Meissner} Meissner, E.: \emph{Über die Anwendung von Fourier-Reihen auf einige Aufgaben der Geometrie und Kinematik}, Vierteljahrsschr. Naturforsch. Ges. Zurich \textbf{54} (1909), 309--329.

\bibitem {MMY} Martinez-Maure, Y.: \emph{Geometric study of Minkowski differences of plane convex bodies}, Can. J. Math. \textbf{58} (2006), 600--624.

\bibitem {MMY2} Martinez-Maure, Y.: \emph{Hedgehog theory via Euler calculus}, Beitr. Algebra Geom. \textbf{56} (2015), no. 2, 397--421.

\bibitem {MMYLorentzian} Martinez-Maure Y.: \emph{Plane Lorentzian and Fuchsian Hedgehogs}, Can. Math. Bull. \textbf{58} (2015), no. 3, 561--574.

\bibitem {MMYDR} Martinez-Maure, Y, Rochera, D.: \emph{Hedgehogs and their widths in elliptic and hyperbolic planes}, J. Geom. Anal. \textbf{35} (2025), 47.

\bibitem{PegSurvey} Matschke, B.: \emph{A survey on the square peg problem}, Notices Amer. Math. Soc. \textbf{61} (2014), no. 4, 346--352.

\bibitem{Dominika} Miller, D., Zwierzy\'nski, M.: \emph{The Geometry of the Centre Symmetry Set of a Planar Curve}, J. Topol. Anal. (2024).

\bibitem {Mozgawa} Mozgawa, W.: \emph{Mellish theorem for generalized constant width curves},
Aequat. Math. \textbf{89} (2015), no. 4, 1095--1105.

\bibitem {OuPan} Ou, K., Pan, S.: \emph{Some remarks about closed convex curves}, Pac. J. Math. \textbf{248} (2010), no. 2, 393--401.

\bibitem {PanShu} Pan, S., Shu, Y.: \emph{A strengthened form of the Bonnesen inequality}, Arch. Math. \textbf{125} (2025), 671--686.

\bibitem {Rochera2} Rochera, D.: \emph{Offsets and front tire tracks to projective hedgehogs}, Comput. Aided Geom. Design \textbf{97} (2022), 102135.

\bibitem {Rochera1} Rochera, D.: \emph{An explicit characterization of isochordal-viewed multihedgehogs with circular isoptics}, J. Math. Anal. Appl. \textbf{524} (2023), no. 2, 127107.

\bibitem {Rochera3} Rochera, D.: \emph{Curves that allow the motion of a regular polygon}, Aequat. Math. \textbf{99} (2025), 377--395.

\bibitem {SC1} dos Santos, R. S., Craizer, M.: \emph{Isoperimetric inequalities in normed planes}, J. Convex Anal. 29 (2022), no. 2, 321--331.

\bibitem {SchneiderBook} Schneider, R.: \emph{Convex Bodies: The Brunn-Minkowski Theory}, 2nd expanded ed., Encycl. Math. Appl., vol. 151, Cambridge Univ. Press, Cambridge, 2014.

\bibitem {S5} Schneider, R.: \emph{Reflections of planar convex bodies}, in: {Convexity and Discrete Geometry Including Graph Theory},
Springer Proc. Math. Stat., vol. 148, Springer, Cham, 2016, 69--76.

\bibitem {S6} Schneider, R.: \emph{The middle hedgehog of a planar convex body}, Beitr. Algebra Geom. \textbf{58} (2017), 235--245.

\bibitem {Schmidt} Schmidt, T.: \emph{Isoperimetric conditions, lower semicontinuity, and existence results for perimeter functionals with measure data}, Math. Ann. \textbf{391} (2025), 5729--5807.

\bibitem {Taba} Tabachnikov, S.: \emph{Iterating skew evolutes and skew involutes: a linear analog of bicycle kinematics}, Mosc. Math. J. \textbf{23} (2023), no. 4, 625--640.

\bibitem {UYBook} Umehara, M., Yamada, K.: \emph{Differential Geometry of Curves and Surfaces}, World Scientific Publishing, 2017.

\bibitem {YY} Yang, Y.: \emph{Two Chernoff-type inequalities and their stability properties}, Math. Inequal. Appl. \textbf{25} (2022), no. 3, 827--837.

\bibitem {ZD} Zhang, D..: \emph{The lower bounds of the mixed isoperimetric deficit}, Bull. Malays. Math. Soc. \textbf{44} (2021), no. 5, 2863--2872.

\bibitem  {Z2} Zwierzy\'nski, M.: \emph{The improved isoperimetric inequality and the Wigner caustic of planar ovals}, J. Math. Anal. Appl. \textbf{442} (2016), 726--739.


\bibitem {Z3} Zwierzy\'nski, M.: \emph{Isoperimetric Equalities for Rosettes}, Int. J. Math. \textbf{31} (2020), no. 5, 2050041.

\bibitem {Z4} Zwierzy\'nski, M.: \emph{The Singular Evolutoids Set and the Extended Front of Evolutoids}, Aequat. Math. \textbf{96} (2022), 849--866.

\bibitem {ZCWMS} Zwierzy\'nski, M.: \emph{The Constant Width Measure Set, The Spherical Measure Set and Isoperimetric Equalities for Planar Ovals}, Commun. Anal. Geom. \textbf{33} (2025), Number 6, 1271--1303.

\end{thebibliography}

\end{document}